\newtheorem{thm}{Theorem}
\newtheorem{cor}[thm]{Corollary}
\newtheorem{lem}[thm]{Lemma}
\newtheorem{prop}[thm]{Proposition}
\newtheorem{conj}[thm]{Conjecture}
\newtheorem{falseconj}[thm]{False Claim}
\theoremstyle{remark}
\newtheorem{rem}[thm]{Remark}
\newtheorem{question}[thm]{Question}
\theoremstyle{definition}
\newtheorem{defn}[thm]{Definition}
\newcommand\nc{\newcommand}
\nc\on{\operatorname}
\newcommand\CC{{\mathbb C}}
\newcommand\fp{{\mathfrak p}}
\newcommand\fq{{\mathfrak q}}
\newcommand\fm{{\mathfrak m}}
\newcommand\mcl{\mathcal}
\newcommand\mbb{\mathbb}
\nc\Hom{\on{Hom}}
\nc\Sym{\on{Sym}}
\nc\Spec{\on{Spec}}
\nc\Specm{\on{Specm}}
\nc{\dfp}{\overset{\cdot}{\fp}}
\nc{\dfq}{\overset{\cdot}{\fq}}
\nc{\dfm}{\overset{\cdot}{\fm}}
\begin{document}

\title{Symmetric Powers Do Not Stabilize}
\author{Daniel Litt}

\maketitle

\begin{abstract}
We discuss the stabilization of symmetric products $\on{Sym}^n(X)$ of a smooth projective variety $X$ in the Grothendieck ring of varieties.  For smooth projective surfaces $X$ with non-zero $h^0(X, \omega_X)$, these products do not stabilize; we conditionally show that they do not stabilize in another related sense, in response to a question of R. Vakil and M. Wood \cite{Ravi/Melanie}.  There are analogies between such stabilization, the Dold-Thom theorem, and the analytic class number formula.  Finally, we discuss Hodge-theoretic obstructions to the stabilization of symmetric products, and provide evidence for these obstructions in terms of a relationship between the Newton polygon of a certain ``motivic zeta function" associated to a curve, and its Hodge polygon.
\end{abstract}

\section{Introduction}\label{Introduction}
Let $k$ be a field, and $K_0(\on{Var}_k)$ the Grothendieck ring of varieties over $k$.  This is the free abelian group on isomorphism classes $[X]$ of separated, finite type $k$-schemes (varieties), subject to the following relation:  $$[X]=[Y]+[X\setminus Y] \text{ for $Y\hookrightarrow X$ a closed embedding}.$$
Multiplication is given by $$[X]\cdot [Y]=[X\times Y]$$ on classes of varieties.  This ring was introduced by Grothendieck in 1964 \cite[(Letter of August 16, 1964)]{Groth} in a letter to Serre.  Let $\mbb{L}:=[\mbb{A}^1_k]$ be the class of the affine line in this ring.

Ravi Vakil and Melanie Wood have conjectured \cite[Conjecture 1.25]{Ravi/Melanie} that in a certain completion of $K_0(Var_k)[\mbb{L}^{-1}]$, denoted $\hat K$ (to be defined in Section 2), the limit $$\lim_{n\to \infty} \frac{[\on{Sym}^n X]}{\mbb{L}^{n\on{dim}(X)}}$$ exists for connected $X$.  They call the existence of this limit ``motivic stabilization of symmetric powers" or MSSP for short; they show this conjecture is true in many cases.  The main goal of this note is to provide some evidence that it is false in general.

We introduce analogous but more accessible claims (False Claims \ref{False Claim 1} and \ref{False Claim 2}) which hold true in every case where MSSP is known to hold.  Furthermore, we show unconditionally  that both False Claims are untrue in general for $k=\mbb{C}$---counterexamples include smooth projective surfaces $X$ with geometric genus $p_g(X)\not=0$ (Corollary \ref{falseclaimsarefalsecor}).\footnote{While this paper was in preparation, Melanie Wood provided several other counterexamples to False Claim \ref{False Claim 1} \cite{Melaniepersonal}.  In particular, her arguments combined with those here show that for smooth projective surfaces with a non-vanishing even plurigenus, False Claim \ref{False Claim 1} fails; in addition to the results here, this covers e.g. Enriques surfaces and certain surfaces of general type but with $p_g=0$.}   We show that these counterexamples are also counterexamples to MSSP, conditional on the truth of either of two well-known conjectures about $K_0(\on{Var}_k)$ (the ``Cut-and-Paste" conjecture of Liu and Sebag \cite{Cut-and-paste} or the conjecture that $\mathbb{L}$ is not a zero divisor), and thus MSSP is false if either of these conjectures are true (Corollary \ref{msspfalsecor}).

Finally, we propose Hodge-theoretic heuristics explaining the failure of MSSP, and give some evidence for these heuristics via explicit computations for curves.  In particular, we prove a motivic analogue of the classical theorem that ``the Newton polygon lies above the Hodge polygon," for Weil zeta functions associated to varieties.  Namely, if $X$ is a smooth projective curve with a rational point, we show that over a general field, a certain ``motivic Newton polygon" associated to $X$ lies above the Hodge polygon of $X$; this implies the classical result for Weil zeta functions if $X$ is defined over a finite field (Lemma \ref{newtonabovehodgelemma}).  Over algebraically closed fields of characteristic zero, we show that the Newton and Hodge polygons of curves are equal (Corollary \ref{newtonequalshodgecor}).  These last results are of independent interest, and are contained in Section 5, which can be read independently of Sections $3$ and $4$.

\subsection{Acknowledgments}  
This note owes a great deal to conversations with David Ayala, Rebecca Bellovin, Jeremy Booher, Jonathan Campbell, Brian Conrad, Fran\c{c}ois Greer, Sander Kupers, Sam Lichtenstein, Cary Malkiewich, Emmy Murphy, Mircea Musta\c{t}\u{a}, Matthew Satriano, Arnav Tripathy, Ravi Vakil, Kirsten Wickelgren, and Melanie Wood.

\section{False Claims and Motivation}
Let us first give the statement of MSSP.  We define a filtration on $K_0(\on{Var}_k)[\mathbb{L}^{-1}]$, given by dimension.  Let $F^iK_0(\on{Var}_k)[\mathbb{L}^{-1}]\subset K_0(\on{Var}_k)[\mathbb{L}^{-1}]$ be the (additive) subgroup generated by elements of the form $[X]/\mathbb{L}^r$ where $X$ is of pure dimension $\leq i+r$.  

We define the ring $\hat K$ to be the completion of $K_0(\on{Var}_k)$ at this filtration, that is, $$\hat K:=\varprojlim K_0(\on{Var}_k)[\mathbb{L}^{-1}]/F^iK_0(\on{Var}_k)[\mathbb{L}^{-1}].$$  The ring $\hat K$ was initially defined by Kontsevich \cite{kontsevich} as the ring in which the values of motivic integrals lie.  The MSSP Conjecture takes place in $\hat K$:
\begin{conj}[Vakil, Wood; motivic stabilization of symmetric powers, or MSSP {\cite[Conjecture 1.25]{Ravi/Melanie}}]  \label{msspconj}
Let $X$ be a connected variety over $k$.  Then $$\lim_{n\to \infty} \frac{[\on{Sym}^n X]}{\mathbb{L}^{n\dim(X)}}$$ exists in $\hat K$.
\end{conj}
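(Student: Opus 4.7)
This is a conjecture rather than a theorem, and the remainder of the paper argues that it is false in general; still, let me sketch the natural strategy one would try, since this will clarify where the obstructions sit. The plan is to verify the Cauchy criterion directly in $\hat K$: for each $i \geq 0$ one wants to produce $N(i)$ such that $[\on{Sym}^{n+1}X] - \mbb{L}^{\dim X}[\on{Sym}^n X]$ is expressible as a sum of classes $[Y]/\mbb{L}^r$ with $\dim Y - r \leq (n+1)\dim X - i$, for all $n \geq N(i)$. Equivalently, the successive ratios $[\on{Sym}^n X]/\mbb{L}^{n \dim X}$ should differ by elements of deeper and deeper pieces of the dimension filtration.

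The main technical input I would try to exploit is Kapranov's motivic zeta function $Z_X(t) := \sum_{n \geq 0}[\on{Sym}^n X]\, t^n$. For a smooth projective curve $C$ of genus $g \geq 1$ Kapranov proved $Z_C(t)$ is rational with denominator $(1-t)(1-\mbb{L}t)$, ultimately because $\on{Sym}^n C \to \on{Jac}(C)$ is a $\PP^{n-g}$-bundle for $n \geq 2g - 1$; in this setting the Cauchy criterion becomes an essentially explicit computation. The same pattern works for $\PP^n$, for toric varieties, and more generally for varieties admitting a paving by affines, and these (combined with fibration arguments) are exactly the cases Vakil and Wood handle. A reasonable first step of a more general attempt would therefore be to stratify $X$ by locally closed pieces on which $[\on{Sym}^n(\cdot)]$ can be computed, and to combine contributions via $[\on{Sym}^n(U \sqcup V)] = \sum_{i+j=n}[\on{Sym}^i U][\on{Sym}^j V]$ and comparable identities for closed-open decompositions.

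The hard part --- and in my view the genuinely insurmountable obstacle --- is Hodge-theoretic. When $X$ is a smooth projective surface with $p_g(X) := h^0(X, \omega_X) \neq 0$, the symmetric products $\on{Sym}^n X$ inherit large spaces of top forms from $H^0(X, \omega_X)^{\otimes n}$, producing cohomology classes whose weights grow like $2n$ rather than staying bounded by the $2n \dim X$ needed for the denominator $\mbb{L}^{n \dim X}$ to absorb them. In sufficiently faithful cohomological realizations of $K_0(\on{Var}_k)$ this should force the sequence to fail the Cauchy criterion, and indeed this is precisely the phenomenon the paper formalizes via False Claims \ref{False Claim 1} and \ref{False Claim 2} and the Newton-above-Hodge comparison of Section 5. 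So I expect the stratification plan above to succeed exactly in the known cases (curves, and various special surfaces with vanishing plurigenera) but to be doomed whenever $p_g(X) \neq 0$ --- which is what makes MSSP a conjecture one ought to expect to be false, rather than a theorem.
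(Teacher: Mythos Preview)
You correctly note that this is a conjecture rather than a theorem; the paper offers no proof of it, so there is nothing to compare on that front. Your sketch of the strategy one would attempt in the known cases (Cauchy criterion in $\hat K$, Kapranov rationality for curves, stratification and the multiplicativity of $Z^{mot}$ under disjoint union) is accurate and matches the discussion in Section~2.

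Where your account diverges from the paper is in the mechanism of the \emph{obstruction}. You attribute the expected failure to a Hodge-weight phenomenon: top forms on $\on{Sym}^n X$ coming from $H^0(X,\omega_X)^{\otimes n}$ produce classes ``whose weights grow like $2n$ rather than staying bounded by the $2n\dim X$ needed for the denominator $\mbb{L}^{n\dim X}$ to absorb them.'' As stated this is hard to parse (for a surface $2n<2n\dim X=4n$, and the filtration on $\hat K$ is by dimension, not by Hodge weight), and in any case it is not the route the paper takes. The paper's argument is stable-birational rather than cohomological: Mumford's theorem on $0$-cycles (Theorem~\ref{mumfordthm}) shows $\on{Sym}^n X$ is not stably birational to $\on{Sym}^m X$ for $n\gg m$; the Larsen--Lunts isomorphism $K_0(\on{Var}_k)/\mbb{L}\simeq\mbb{Z}[SB]$ (Theorem~\ref{zsbthm}) then forces $[\on{Sym}^n X]\bmod\mbb{L}$ not to stabilize; G\"ottsche's formula (Theorem~\ref{gottschethm}) is needed to pass between $\on{Hilb}^n$ and $\on{Sym}^n$; and the transfer to $\hat K$ is conditional on Conjecture~\ref{cutandpasteconj} or~\ref{lisnotazerodivisorconj}. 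The Newton--Hodge material of Section~5 is an \emph{a posteriori} heuristic, worked out only for curves, and is not the engine of the disproof---so invoking it as ``precisely the phenomenon the paper formalizes'' overstates its role.
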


\begin{rem} \label{smoothgeneration}
Note that resolution of singularities implies that $F^iK_0(\on{Var}_k)$ is (additively) generated by elements of the form $[X]/\mathbb{L}^r$ with $X$ \emph{smooth and proper} and of pure dimension $\leq i+r$.  In arbitrary characteristic, $F^iK_0(\on{Var}_k)$ is generated by elements of the form $[X]/\mathbb{L}^r$ with $X$ \emph{proper} and of pure dimension $\leq i+r$.  To see this, note that by Noetherian induction, $K_0(\on{Var}_k)$ is generated by the classes of affine varieties.  But affine varieties are in the subring generated by proper varieties, as they may be compactified, e.g. by considering their scheme-theoretic image under some quasi-projective embedding.
\end{rem}

Unfortunately, it is difficult to understand the behavior of $\hat K$, as it is unknown whether or not $\mbb{L}$ is a zero divisor.  Thus, for much of this note, we will instead consider the completion $R$ of $K_0(\on{Var}_k)$ at the ideal $(\mbb{L})$.  That is, $$R:=\varprojlim K_0(\on{Var}_k)/\mbb{L}^n.$$  

\begin{rem} \label{separatedtopology}
It is not known whether $$\bigcap_n (\mathbb{L}^n)=(0).$$  That is, it is unclear whether the $\mbb{L}$-adic topology on $K_0(\on{Var}_k)$ is separated. 
\end{rem}

The completion $R$ has three advantages:  
\begin{itemize}
\item Many well-known homomorphisms from $K_0(\on{Var}_k)$ to other rings (so-called ``motivic measures" \cite{motivicmeasures}) extend continuously to $R$, but not to $\hat K$.  For example, if $k=\mathbb{F}_q$, with $q=p^n$, the homomorphism $$\psi_q: K_0(\on{Var}_k)\to \mathbb{Z}$$ defined by $$\psi_q:  [X]\mapsto \#X(\mbb{F}_q)$$ extends to a continuous homomorphism $R\to \mathbb{Z}_p$.
\item $R$ is easier to work with than $\hat K$---in particular, Theorem \ref{zsbthm} below shows that $K_0(\on{Var}_k)/\mathbb{L}$ remembers exactly the stable birational geometry of smooth projective varieties, so convergence of limits in $R$ ``to first order" has geometric meaning.
\item If $k$ is algebraically closed of characteristic zero, we may define a surjection of topological rings $$\mathbb{D}: R[\mathbb{L}^{-1}]\to \hat K$$ (see Remark \ref{completions}); if $\mathbb{L}$ is not a zero-divisor, then $\mbb{D}$ is an isomorphism.  For a smooth projective curve $X$, $$\mbb{D}([\on{Sym}^n(X)])=\frac{[\on{Sym}^n(X)]}{\mathbb{L}^n}.$$  For a smooth projective surface $S$, $$\mbb{D}([\on{Hilb}^n(S)])= \frac{[\on{Sym}^n(S)]}{\mathbb{L}^{2n}} \bmod F^{-1}K_0(\on{Var}_k)[\mbb{L}^{-1}].$$ 
\end{itemize}

 Thus, we consider the following False Claim as an alternative to Conjecture \ref{msspconj}---much of the work in this note will be aimed at examining the circumstances in which this claim fails. 

\begin{falseconj}\label{False Claim 1}
For connected $X$, the limit $$\lim_{n\to \infty} [\on{Sym}^n X]$$ exists in $R$.
\end{falseconj}

False Claim \ref{False Claim 1} would immediately imply
\begin{falseconj}\label{False Claim 2}
For $X$ connected and $n, m\gg 0$, $[\on{Sym}^n X]=[\Sym^m X]$ in $K_0(\on{Var}_k)/\mbb{L}$.
\end{falseconj} 
which we will disprove in Section 4, for $k$ an algebraically closed field of characteristic $0$.  

Before disproving these claims, however, we would like to take the odd step of motivating our False Claims \ref{False Claim 1} and \ref{False Claim 2}, as well as the MSSP conjecture of Vakil and Wood.  In particular, we will make a case that these false claims are natural, despite their falsehood.

First, let $C$ be a smooth proper genus $g$ curve over a field $k$, with a $k$-rational point.  Then for $n\gg 0$, $\on{Sym}^n C$ is a (Zariski) $\mathbb{P}^{n-g}$-bundle over $\on{Jac}(C)$, and so $$[\on{Sym}^n C]=(1+\mathbb{L}+\cdots +\mathbb{L}^{n-g})[\on{Jac}(C)]$$ in $K_0(\on{Var}_k)$.  Thus the limit 
$$\lim_{n\to \infty} [\on{Sym^n} C]$$ clearly exists in $R$ and equals 
\begin{equation}\label{dold-thom}
[\on{Jac}(C)](1+\mbb{L}+\mbb{L}^2+\cdots).
\end{equation}

So False Claim \ref{False Claim 1} holds for smooth proper curves with a rational point. 

Similarly, False Claim \ref{False Claim 1} holds for connected rational or uniruled surfaces, and for connected varieties $X$ whose classes $[X]\in K_0(\on{Var}_k)$ are polynomial in $\mathbb{L}$, e.g. (split) affine algebraic groups and their homogeneous spaces.  The proofs of these claims are not difficult, so we omit them.  These examples also provide evidence for MSSP; indeed, every $X$ for which MSSP is known to hold also satisfies False Claim \ref{False Claim 1}, and vice versa.

MSSP and False Claim \ref{False Claim 1}, when true, provide an algebro-geometric analogue of the following beautiful theorem of Dold and Thom.
\begin{thm}[Dold-Thom \cite{doldthom}] \label{doldthomthm}
Let $X$ be a connected CW complex with basepoint $x_0$.  Let $SP^\infty(X)$ be the direct limit of the spaces $\on{Sym}^n X$, under the maps $\on{Sym}^n X\hookrightarrow \on{Sym}^{n+1} X$ given by $(x_1, x_2, \cdots, x_n)\mapsto (x_0, x_1, ..., x_n)$.  Then there is an identification $$SP^\infty(X)\simeq_w \prod_{i>0}K(H_i(X, \mathbb{Z}), i),$$
where $K(G, n)$ is an Eilenberg-Maclane space; namely, a space with $\pi_n(K(G, n))=G, \pi_m(K(G, n))=\{0\}$ for $m\not=n$.  The symbol $\simeq_w$ denotes weak equivalence.
\end{thm}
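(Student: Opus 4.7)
The classical approach is to show $\pi_n(SP^\infty(X)) \cong \tilde{H}_n(X; \mathbb{Z})$, and then exploit the topological abelian monoid structure on $SP^\infty(X)$ to split it as a product of Eilenberg-MacLane spaces. So the plan has two halves: first compute homotopy groups, then deduce the product decomposition from algebraic structure.

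First, I would observe that $SP^\infty(X)$ is a topological abelian monoid under concatenation of formal sums, and for connected based $X$ it has the homotopy type of a topological abelian group (via a group completion argument, or directly by a theorem of James). Next, I would verify that the functor $X \mapsto \pi_*(SP^\infty(X))$ defines a reduced generalized homology theory on pointed CW complexes satisfying the dimension axiom; the uniqueness of ordinary homology would then identify it with $\tilde H_*(X;\mathbb{Z})$. Homotopy invariance is automatic since $SP^\infty$ is continuous; the wedge axiom holds because $SP^\infty$ turns a wedge into a weak product of the factors; the dimension axiom is clear because $SP^\infty(S^0)$ is a discrete copy of $\mathbb{N}$.

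The main obstacle is the exactness axiom: for a closed cofibration $A \hookrightarrow X$, the natural map $SP^\infty(X) \to SP^\infty(X/A)$ should be a quasi-fibration with fiber $SP^\infty(A)$, yielding the long exact sequence in homotopy. Dold and Thom's ingenious argument proceeds inductively along the filtration $SP^n(X) \subset SP^{n+1}(X)$, using their general criterion that a map is a quasi-fibration if it becomes one over each piece of a suitable open cover and the pieces glue compatibly. The induction step exploits the fact that on the stratum $SP^n(X) \setminus SP^{n-1}(X)$, which is the space of unordered $n$-tuples of non-basepoint elements of $X$, the restriction of $SP^\infty(X) \to SP^\infty(X/A)$ is an honest locally trivial bundle whose fibers are identifiable with products of $SP^{k}(A)$'s. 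This quasi-fibration lemma is the technical heart of the theorem and cannot be shortcut.

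Finally, to obtain the product decomposition, I would use that a connected topological abelian group is weakly equivalent to the product of Eilenberg-MacLane spaces of its homotopy groups. One way to see this is that $SP^\infty(X)$ is a grouplike commutative $H$-space, hence an infinite loop space, whose Postnikov tower has vanishing $k$-invariants (these $k$-invariants, being primitive cohomology classes of abelian groups of the right form, can be killed by sectioning each stage); alternatively, one can invoke Moore's direct splitting theorem for topological abelian groups of the homotopy type of a CW complex. Combined with the identification $\pi_i(SP^\infty(X)) \cong H_i(X;\mathbb{Z})$ from the first half, this yields the desired weak equivalence $SP^\infty(X) \simeq_w \prod_{i>0} K(H_i(X, \mathbb{Z}), i)$.
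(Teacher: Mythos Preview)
The paper does not prove this theorem at all: it is quoted from Dold and Thom's original paper \cite{doldthom} as background, and the author uses it only as motivation for the motivic analogue (False Claim \ref{False Claim 1}). There is therefore no proof in the paper to compare your proposal against.

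That said, your sketch is a faithful outline of the classical Dold--Thom argument: reduce to showing $\pi_n(SP^\infty(X))\cong \tilde H_n(X;\mathbb{Z})$ by verifying the Eilenberg--Steenrod axioms, with the crux being the quasi-fibration theorem for $SP^\infty(X)\to SP^\infty(X/A)$; then split the resulting topological abelian monoid into a product of Eilenberg--MacLane spaces. One minor caution: your dimension-axiom remark that $SP^\infty(S^0)$ is ``a discrete copy of $\mathbb{N}$'' is slightly off, since you want the \emph{reduced} theory on pointed spaces and should be checking $SP^\infty(S^n)\simeq K(\mathbb{Z},n)$ (or equivalently that the coefficient groups are $\mathbb{Z}$ concentrated in degree zero for $S^0$); but this is a detail, not a gap in the strategy.
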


Let us compare this to the situation in False Claim \ref{False Claim 1} for $X$ a smooth proper curve of genus $g$ over $\mathbb{C}$.  In this case, $\on{Jac}(X)$ has the homotopy type of a $K(H_1(X, \mathbb{Z}), 1)$ by construction, and $H_2(X, \mathbb{Z})=\mathbb{Z}$, so $K(H_2(X, \mathbb{Z}), 2)\simeq_w \CC\mathbb{P}^\infty\simeq \varinjlim \CC\mathbb{P}^n$, which we may view as being represented by the class $$[\mathbb{CP}^\infty]:=\lim_{n\to \infty} [\mathbb{P}^n]=1+\mathbb{L}+\mathbb{L}^2+\cdots$$ in $R$.  Thus the Dold-Thom theorem gives $$\varinjlim \on{Sym}^n X\simeq_w K(\mathbb{Z}^{2g}, 1)\times K(\mathbb{Z},2)\simeq_w \on{Jac}(X)\times \mathbb{CP}^\infty.$$  Likewise our computation (\ref{dold-thom}) above gives
$$\lim_{n\to \infty} [\on{Sym}^n X]=[\on{Jac}(C)][\mathbb{CP}^\infty]$$ in $R$.  So we may view the Dold-Thom theorem as evidence for False Claim \ref{False Claim 1}.

Finally, consider the case $k=\mathbb{F}_q$, where $q=p^n$.  In this case, there is a ``point-counting" ring homomorphism $\psi_q: K_0(\on{Var}_k)\to \mathbb{Z}$ sending a variety $X$ to $\#X(\mathbb{F}_q)$.  By analogy to the zeta function $$\zeta_X(t):=\on{exp}\left(\sum_n \frac{\#X(\mathbb{F}_{q^n})}{n}t^n\right)$$ appearing in the Weil conjectures, Kapranov \cite[1.3]{kapranov} introduced the following ``motivic zeta function," associated to a variety $X$ over an arbitrary field:  $$Z^{mot}_X(t)=\sum_{i=0}^\infty [\on{Sym}^n X]t^n\in K_0(\on{Var}_k)[[t]].$$  In the case that $k$ is finite, $$\psi_q(Z^{mot}_X(t))=\zeta_X(t)$$ where $\psi_q$ denotes the homomorphism $K_0(\on{Var}_k)[[t]]\to \mathbb{Z}[[t]]$ induced by applying the point-counting map coefficient-wise.  

We have $$\lim_{n\to \infty}[\on{Sym}^n X]= (1-t)Z_X^{mot}(t)|_{t=1}$$ formally, where both expressions are evaluated in $R$; if $X$ is a curve or a rational or uniruled surface, $(1-t)Z_X^{mot}(t)$ is everywhere convergent as a power series over $R$.  If $X$ is a curve, the limit on the left specializes under $\psi_q$ to the ``analytic class number formula" for the zeta functions appearing in the Weil conjectures; we have that 
\begin{equation}\label{classnumber1}
\on{res}_{t=1}\zeta_X(t)=\frac{\#\on{Jac}(X)(\mathbb{F}_q)}{1-q}
\end{equation} and likewise 
\begin{equation}\label{classnumber2}
(1-t)Z_X^{mot}(t)|_{t=1}=\frac{[\on{Jac}(X)]}{1-\mathbb{L}}.
\end{equation}
  Thus we may view False Claim \ref{False Claim 1} or MSSP as analogues of analytic class number formulas.  To put it another way, these claims are analogues of the fact that the zeta functions appearing in the Weil conjectures have a pole of order 1 at $t=1$---\emph{assuming the power series expansion for $(1-t)Z^{mot}_X(t)$ is valid at $t=1$}.  We will expand on this last heuristic in Section 5.

\begin{rem} \label{analyticclassnumber}
We refer to Equations (\ref{classnumber1}) and (\ref{classnumber2}) as ``analytic class number formulas" because of their (informal) resemblance to the analytic class number formula for Dedekind zeta functions associated to number fields.
\end{rem}

\begin{rem} \label{zetarationality}
By analogy to the Weil conjectures, one might guess that $Z_X^{mot}(t)$ is the power series associated to a rational function with coefficients in $K_0(\on{Var}_k)$.  Kapranov shows that this is true for curves with a rational point \cite[(1.3.5)(a)]{kapranov}, where the hypothesis of the existence of a rational point is left implicit.  For curves with no rational point the argument does not work.  The issue is that the usual Picard functor is not representable in this case, and so $\on{Sym}^n(X)$ is not a projective space bundle over $\on{Pic}^n(X)$, which is an obstruction to Kapranov's argument.  It is unclear to the author if this issue can be rectified.  On the other hand, Larsen and Lunts have shown \cite{motivicmeasures, rationalitycriteria} that these zeta functions are not rational over $K_0(\on{Var}_k)$ for most surfaces.  The question of the rationality of $Z^{mot}_X(t)$ over $K_0(\on{Var}_k)[\mbb{L}^{-1}]$ is open.
\end{rem}

The plan for the rest of this note is as follows.  In Section 3 we will introduce several facts and conjectures about $K_0(\on{Var}_k)$ and discuss their interplay.  In Section 4 we will disprove False Claims \ref{False Claim 1} and \ref{False Claim 2} and deduce the conditional falsity of MSSP in the case of smooth projective surfaces $X$ with non-vanishing $H^0(X, \omega_X)$.  In Section 5 we will propose Hodge-theoretic heuristics for the failure of MSSP and give evidence for them in terms of the ``Newton polygons" of Kapranov zeta functions $Z_X^{mot}(t)$ of curves.
\section{Preliminaries and Discussion}
From here on, unless otherwise stated, $k$ will be algebraically closed of characteristic zero.

In \cite{Bittner}, Bittner gives the following useful presentation of $K_0(\on{Var}_k)$---the proof uses resolution of singularities and weak factorization of rational maps.  We will use her description of $K_0(\on{Var}_k)$ and some of her constructions to relate MSSP to False Claims \ref{False Claim 1} and \ref{False Claim 2}.
\begin{thm}[Bittner {\cite[Theorem 3.1]{Bittner}}] \label{bittnerthm}
$K_0(\on{Var}_k)$, for $k$ algebraically closed and of characteristic zero, is generated by the classes of smooth proper $k$-varieties, subject only to the following relations:  $$[\on{Bl}_{Y}(X)]-[E]=[X]-[Y]$$ for $X$ proper, $Y$ a smooth closed subvariety of $X$, and $E$ the exceptional divisor of the blowup $\on{Bl}_Y(X)$.
\end{thm}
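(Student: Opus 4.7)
The plan is to define the ring $K_0^{\on{bl}}(\on{Var}_k)$ abstractly generated by classes of smooth proper $k$-varieties modulo the stated blowup relations, and to exhibit mutually inverse ring homomorphisms between $K_0^{\on{bl}}$ and $K_0(\on{Var}_k)$. The forward map $\phi: K_0^{\on{bl}} \to K_0(\on{Var}_k)$ is immediate: the blowup relation $[\on{Bl}_Y X] - [E] = [X] - [Y]$ holds in $K_0(\on{Var}_k)$ because $\on{Bl}_Y(X) \setminus E \cong X \setminus Y$, so both sides equal $[X\setminus Y]$ by scissors. Surjectivity of $\phi$ follows from resolution of singularities and compactification: by Noetherian induction on dimension, any variety $X$ admits a smooth proper compactification $\bar X$ with complement of strictly smaller dimension, so $[X] = [\bar X] - [\bar X\setminus X]$ inductively lies in the image.

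The real content is constructing an inverse $e: K_0(\on{Var}_k) \to K_0^{\on{bl}}$. For each variety $X$ choose, by Hironaka, a smooth proper $\bar X$ together with an SNC divisor $D = \bigcup_{i=1}^r D_i$ with $\bar X \setminus D \cong X$. Writing $D_I := \bigcap_{i\in I} D_i$ (so $D_\emptyset = \bar X$), I would set
\begin{equation*}
e(X) := \sum_{I \subseteq \{1,\ldots,r\}} (-1)^{|I|}\, [D_I] \in K_0^{\on{bl}}(\on{Var}_k).
\end{equation*}
Each $D_I$ is smooth and proper, so this element lives in $K_0^{\on{bl}}$, and under $\phi$ it recovers $[X]$ via inclusion-exclusion on $[D]$.

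The main obstacle is showing $e(X)$ is independent of the chosen SNC compactification. This is where the weak factorization theorem of Abramovich--Karu--Matsuki--W\l odarczyk is essential: any two smooth proper compactifications of $X$ with SNC boundary are connected by a finite sequence of blowups and blowdowns along smooth centers having normal crossings with the boundary. I would verify invariance one blowup at a time: for a blowup $\pi: \widetilde X \to \bar X$ along such a center $Z$, I would analyze how each stratum transforms (the new boundary consists of the strict transforms of the $D_i$ together with the exceptional divisor over $Z$) and check that the alternating sum is preserved after repeatedly rewriting the blowup contributions using the defining relation of $K_0^{\on{bl}}$. The combinatorial bookkeeping here -- tracking how the lattice of strata is modified, and matching terms in pairs using the blowup relation applied to $Z \cap D_I$ inside $D_I$ -- is the main technical step.

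Once $e$ is well-defined on isomorphism classes, I would verify that it respects scissors by choosing compatible SNC compactifications for $Y\hookrightarrow X$ and $X\setminus Y$ (blowing up a resolution of $Y$ inside a compactification of $X$), and that it respects products by taking products of SNC compactifications (Cartesian products of SNC boundaries remain SNC). This yields a ring homomorphism $e: K_0(\on{Var}_k) \to K_0^{\on{bl}}$. That $e$ and $\phi$ are mutually inverse is then routine: on a smooth proper $X$, taking $\bar X = X$ with empty boundary gives $e([X]) = [X]$, so $e \circ \phi = \on{id}$; and $\phi \circ e = \on{id}$ holds because $\phi(e(X)) = [X]$ by the inclusion-exclusion check above.
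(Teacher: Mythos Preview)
The paper does not prove this theorem at all; it is quoted from Bittner \cite[Theorem 3.1]{Bittner} as an external input, used immediately afterward to construct the duality map in Corollary~\ref{dualitycor}. So there is no ``paper's own proof'' to compare your proposal against.

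For what it is worth, your outline is precisely Bittner's strategy: define the blowup ring $K_0^{\on{bl}}$, map it to $K_0(\on{Var}_k)$ in the obvious way, and build an inverse by sending a variety to the alternating sum over strata of an SNC compactification, with well-definedness supplied by the weak factorization theorem of Abramovich--Karu--Matsuki--W\l odarczyk. You have correctly identified where the work lies --- the combinatorial verification that the alternating sum is unchanged under a single blowup along a smooth center meeting the boundary transversally --- but you have not actually carried it out, and the same goes for the compatibility of SNC compactifications needed to check the scissors relation. So what you have written is an accurate roadmap of Bittner's proof rather than a proof in its own right; a reader wanting the details should still consult \cite{Bittner}.
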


Bittner uses this presentation to construct a ``duality map" $\mbb{D}: K_0(\on{Var}_k)\to K_0(\on{Var}_k)[\mbb{L}^{-1}]$, which we will use heavily.
\begin{cor}[Duality map {\cite[Corollary 3.4]{Bittner}}] \label{dualitycor}
There exists a unique ring homomorphism $\mbb{D}: K_0(\on{Var}_k)\to K_0(\on{Var}_k)[\mbb{L}^{-1}]$ with $$\mbb{D}([X])=\frac{[X]}{\mbb{L}^{\on{dim}(X)}}$$ for $X$ smooth and proper.
\end{cor}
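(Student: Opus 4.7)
The plan is to invoke Bittner's presentation (Theorem \ref{bittnerthm}): as an abelian group, $K_0(\on{Var}_k)$ is generated by classes of smooth proper $k$-varieties modulo the blowup relations. First I would define a preliminary map $\widetilde{\mathbb{D}}$ on the free abelian group on smooth proper pure-dimensional varieties by $[X] \mapsto [X]/\mathbb{L}^{\dim X}$, extending a smooth proper $X$ of mixed dimension as the sum over its connected components (each of which is pure-dimensional). Uniqueness of $\mathbb{D}$, should it exist, is then immediate since Bittner's theorem exhibits smooth proper classes as generators of $K_0(\on{Var}_k)$.

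The real content is to check that the blowup relation is respected. Let $X$ be smooth proper of pure dimension $n$, let $Y \subset X$ be smooth of pure codimension $r$, and let $E \subset \on{Bl}_Y X$ be the exceptional divisor. Because $Y$ is smooth in the smooth variety $X$, the normal bundle $N_{Y/X}$ is a rank-$r$ vector bundle, and $E = \mathbb{P}(N_{Y/X})$ is Zariski-locally trivial as a $\mathbb{P}^{r-1}$-bundle over $Y$; stratifying by a trivializing open cover yields $[E] = [Y](1 + \mathbb{L} + \cdots + \mathbb{L}^{r-1})$ in $K_0(\on{Var}_k)$. Applying $\widetilde{\mathbb{D}}$ to the blowup relation and clearing the $\mathbb{L}^n$ denominator reduces the desired equality $\widetilde{\mathbb{D}}([\on{Bl}_Y X]) - \widetilde{\mathbb{D}}([E]) = \widetilde{\mathbb{D}}([X]) - \widetilde{\mathbb{D}}([Y])$ to the identity $[\on{Bl}_Y X] - \mathbb{L}[E] = [X] - \mathbb{L}^r[Y]$. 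Substituting $[\on{Bl}_Y X] = [X] - [Y] + [E]$ from Bittner's relation and using $(1 - \mathbb{L})[E] = [Y](1 - \mathbb{L}^r)$, both sides collapse to $[X] - \mathbb{L}^r[Y]$, so $\widetilde{\mathbb{D}}$ descends to an additive map $\mathbb{D}: K_0(\on{Var}_k) \to K_0(\on{Var}_k)[\mathbb{L}^{-1}]$.

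Finally, to promote $\mathbb{D}$ to a ring homomorphism it is enough to check multiplicativity on generators: for smooth proper pure-dimensional $X$ and $Y$, the product $X \times Y$ is smooth proper of pure dimension $\dim X + \dim Y$, so $\mathbb{D}([X])\mathbb{D}([Y]) = [X][Y]/\mathbb{L}^{\dim X + \dim Y} = [X \times Y]/\mathbb{L}^{\dim(X \times Y)} = \mathbb{D}([X \times Y])$, and bilinear extension gives multiplicativity throughout. The essential (indeed only) obstacle is the blowup compatibility in the middle step, and what makes the $\mathbb{L}$-weighted cancellation balance is precisely the projective bundle formula for the exceptional divisor: one needs $(1 - \mathbb{L})[E] = (1 - \mathbb{L}^r)[Y]$, which in turn rests on the Zariski-local triviality of the normal bundle on a smooth subvariety of a smooth variety.
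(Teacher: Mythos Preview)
Your proof is correct and follows essentially the same approach as the paper: invoke Bittner's presentation, verify the blowup relation by clearing the $\mathbb{L}^{\dim X}$ denominator and using $[E]=[Y][\mathbb{P}^{r-1}]$ together with $(1-\mathbb{L})[\mathbb{P}^{r-1}]=1-\mathbb{L}^r$, and deduce multiplicativity from additivity of dimension. The only cosmetic difference is that you are a bit more explicit about setting up $\widetilde{\mathbb{D}}$ on the free abelian group and handling mixed-dimensional $X$ componentwise.
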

\begin{proof}
By Theorem \ref{bittnerthm}, $\mbb{D}$ is uniquely determined by its value on smooth proper varieties; we must check that $$\mbb{D}([\on{Bl}_Y(X)])-\mbb{D}([E])=\mbb{D}([X])-\mbb{D}([Y])$$ in $K_0(\on{Var}_k)[\mbb{L}^{-1}]$ for $X, Y$ as in Theorem \ref{bittnerthm}.  Using the fact that
$$(1-\mbb{L})[\mbb{P}^n]=1-\mbb{L}^{n+1}$$
 we have
\begin{align*}
\mbb{L}^{\on{dim}(X)}(\mbb{D}([\on{Bl}_Y(X)])-\mbb{D}([E])) &= [\on{Bl}_Y(X)]-\mbb{L}[E]\\
&=[X]-[Y]+[E]-\mbb{L}[E]\\
&=[X]-[Y]+(1-\mbb{L})[\mbb{P}^{\on{dim}(X)-\on{dim}(Y)-1}][Y]\\
&=[X]-[Y]+(1-\mbb{L}^{\on{dim}(X)-\on{dim}(Y)})[Y]\\
&=[X]-\mbb{L}^{\on{dim}(X)-\on{dim}(Y)}[Y]\\
&=\mbb{L}^{\on{dim}(X)}(\mbb{D}([X])-\mbb{D}([Y])).
\end{align*}
Dividing by $\mbb{L}^{\on{dim}(X)}$ gives the claim.  That $\mbb{D}$ is a ring homomorphism follows from the additivity of dimension.
\end{proof}
Note that $\mbb{D}(\mbb{L})=\mbb{D}(\mbb{P}^1)-\mbb{D}([\{\on{pt}\}])=\mbb{L}^{-1}$.  Thus $\mbb{D}$ induces a map $\mbb{D}': K_0(\on{Var}_k)[\mbb{L}^{-1}]\to K_0(\on{Var}_k)[\mbb{L}^{-1}]$, satisfying $\mbb{D}'\circ \mbb{D}'=\on{id}$.

We will need one further result on $K_0(\on{Var}_k)$, due to Larsen and Lunts \cite{motivicmeasures}; it also follows from Bittner's presentation of $K_0(\on{Var}_k)$.  
\begin{defn}[Stable birationality]
Let $X$ and $Y$ be two varieties; recall that $X$ is \emph{stably birational} to $Y$ if $X\times \mbb{A}^n$ is birational to $Y\times \mbb{A}^m$ for some $m, n$.  If $k$ is separably closed, $SB$ denotes the monoid of stable birational equivalence classes of smooth, connected, proper $k$-varieties under the operation of Cartesian product.
\end{defn}

\begin{thm}[Larsen and Lunts, {\cite[Proposition 2.7]{motivicmeasures}}] \label{zsbthm}
Let $k$ be algebraically closed of characteristic zero.  Then there is a ring homomorphism $sb: K_0(\on{Var}_k)/\mbb{L}\to \mbb{Z}[SB]$, sending the class of a smooth proper variety to its stable birational equivalence class; $sb$ is an isomorphism.
\end{thm}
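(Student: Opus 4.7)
The plan is to exploit Bittner's presentation of $K_0(\on{Var}_k)$ (Theorem \ref{bittnerthm}) by defining $sb$ on smooth proper generators, checking it respects the blowup relation and kills $\mbb{L}$, and then constructing an explicit inverse. Concretely, I set $sb([X])=[X]_{SB}\in \mbb{Z}[SB]$ for $X$ smooth, proper, and connected, and extend additively and multiplicatively (using $[X\times Y]=[X][Y]$). To verify well-definedness, I check Bittner's relation: for $X$ smooth proper, $Y\subset X$ smooth closed, and $E\to Y$ the exceptional divisor of $\on{Bl}_Y(X)$, the blowup is birational to $X$ and $E$ is a Zariski projective bundle over $Y$ hence stably birational to $Y$. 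Therefore $[\on{Bl}_Y(X)]-[E]$ and $[X]-[Y]$ have the same image in $\mbb{Z}[SB]$, so $sb$ is a ring homomorphism $K_0(\on{Var}_k)\to \mbb{Z}[SB]$. Since $\mbb{L}=[\mbb{P}^1]-[\on{pt}]$ and $\mbb{P}^1$ is rational, $sb(\mbb{L})=0$, and $sb$ descends to $K_0(\on{Var}_k)/\mbb{L}\to \mbb{Z}[SB]$.

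For the inverse, I propose a map $t\colon \mbb{Z}[SB]\to K_0(\on{Var}_k)/\mbb{L}$ sending a stable birational class $[X]_{SB}$ (with $X$ a smooth proper representative) to $[X] \bmod \mbb{L}$, extended as a ring homomorphism. The heart of the argument is the well-definedness of $t$: I must show that stably birational smooth proper varieties $X$ and $Y$ satisfy $[X]\equiv [Y]\pmod{\mbb{L}}$. I first handle the birational case via weak factorization, which provides a chain of smooth blowups and blowdowns between $X$ and $Y$; each step produces a Bittner relation $[\on{Bl}_Z W] - [W] = [E]-[Z]$ where $E$ is a $\mbb{P}^{r-1}$-bundle over $Z$ for $r=\on{codim}_W(Z)$, so $[E]=(1+\mbb{L}+\cdots+\mbb{L}^{r-1})[Z]$ and $[\on{Bl}_Z W] - [W]\in (\mbb{L})$. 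To pass from birational to stably birational, I note $[X\times \mbb{P}^n]\equiv [X]\pmod{\mbb{L}}$; if $X\times \mbb{A}^n$ is birational to $Y\times \mbb{A}^m$, then the smooth proper varieties $X\times \mbb{P}^n$ and $Y\times \mbb{P}^m$ are birational, so $[X]\equiv [X\times \mbb{P}^n]\equiv [Y\times \mbb{P}^m]\equiv [Y]\pmod{\mbb{L}}$.

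Once both maps are constructed, checking that $sb\circ t$ and $t\circ sb$ are the identity is immediate on generators and hence on the whole rings. The substantive obstacle is the appeal to weak factorization of birational maps of Abramovich-Karu-Matsuki-Wlodarczyk, which is what forces the characteristic zero hypothesis (and which also underlies Bittner's presentation itself). Granting this deep input, the rest of the proof is essentially formal bookkeeping with the Bittner relations.
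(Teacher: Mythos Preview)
The paper does not prove this theorem; it is quoted from Larsen--Lunts \cite{motivicmeasures} and accompanied only by the remark that ``it also follows from Bittner's presentation of $K_0(\on{Var}_k)$.'' Your proposal is correct and is exactly an implementation of that remark: you use Theorem~\ref{bittnerthm} to define $sb$ on smooth proper generators, verify the blowup relation (both sides map to $[X]_{SB}-[Y]_{SB}$ since $\on{Bl}_Y X$ is birational to $X$ and $E$ is a projective bundle over $Y$), observe $sb(\mbb{L})=0$, and then build the inverse by weak factorization plus the identity $[X\times\mbb{P}^n]\equiv[X]\pmod{\mbb{L}}$.

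For comparison, the original Larsen--Lunts argument predates Bittner's presentation and so cannot simply quote it. They instead define $sb$ on an arbitrary variety by choosing a smooth projective compactification of a resolution, and must check directly that this is independent of choices and compatible with the scissor relation $[X]=[Y]+[X\setminus Y]$; the substantive input is again resolution of singularities and weak factorization, but the bookkeeping is heavier. Your route through Bittner packages all of that once and for all, which is what the paper's one-line remark is pointing at. One small point worth making explicit in your write-up: Bittner's generators are smooth proper but not necessarily connected, while $SB$ consists of classes of \emph{connected} varieties, so you should say that $sb$ is first defined on connected smooth proper varieties and extended additively over connected components.
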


For future reference, we will list here two conjectures about $K_0(\on{Var}_k)$; we will show that MSSP fails conditional on the truth of either of these conjectures.

\begin{conj}[Cut-and-paste conjecture, Liu and Sebag \cite{Cut-and-paste}] \label{cutandpasteconj}
Let $X$ and $Y$ be varieties over $k$.  If $[X]=[Y]$ in $K(\on{Var}_k)$, then there exist disjoint locally closed subvarieties $X_i$ of $X$, $Y_i$ of $Y$, such that $$X=\cup X_i\text{ and } Y=\cup Y_i$$ and $$X_i\simeq Y_i$$ for all $i$.
\end{conj}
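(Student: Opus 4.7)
The plan is to try to reduce to elementary scissors moves via Bittner's presentation (Theorem \ref{bittnerthm}), which, for $k$ algebraically closed of characteristic zero, shows that $K_0(\on{Var}_k)$ is generated by smooth proper varieties subject only to the blow-up relation $[\on{Bl}_Y(X)]-[E]=[X]-[Y]$. Each such blow-up relation admits an explicit scissors witness: on the blow-up side one has $[\on{Bl}_Y(X)]=[X\setminus Y]+[E]$, and on the original side $[X]=[X\setminus Y]+[Y]$; here $E\to Y$ is a Zariski-locally trivial projective bundle, so $E$ and $Y$ further stratify into pieces in explicit bijection, modulo the fiber-dimension contribution that the Bittner identity records. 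The hope is that any identity $[X]=[Y]$ can be realized by composing such elementary witnesses.

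First I would reduce to smooth proper $X$ and $Y$ using Noetherian induction and resolution of singularities, in the spirit of Remark \ref{smoothgeneration}: any variety is scissors-equivalent to a formal difference of smooth proper pieces, so it suffices to handle this case. Then, given an equality $[X]=[Y]$ among such classes, I would look for a finite chain of Bittner moves $[X]=[Z_0]\to[Z_1]\to\cdots\to[Z_n]=[Y]$ and attempt to compose the scissors witnesses step by step. Refining iteratively to common strata — using that intersections of locally closed subsets are again locally closed — one would hope to extract a decomposition $X=\bigsqcup X_i$, $Y=\bigsqcup Y_i$ with $X_i\simeq Y_i$.

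The main obstacle, and I suspect the genuine sticking point, is that Bittner relations live in a formal abelian group, not at the level of scissors identities among schemes. If $[A]+[B]=[C]+[D]$ in $K_0(\on{Var}_k)$, it is not clear that $A\sqcup B$ is piecewise isomorphic to $C\sqcup D$; cancellation in the free abelian group can shuffle pieces between summands, and there is no evident mechanism for untangling this on the variety side. This is closely tied to whether $\mbb{L}$ is a zero divisor: a relation $\mbb{L}\cdot[X]=\mbb{L}\cdot[Y]$ with $[X]\ne[Y]$ would give $X\times\mbb{A}^1$ and $Y\times\mbb{A}^1$ with matching classes but no piecewise isomorphism between $X$ and $Y$, so a proof of the conjecture would in particular have to rule this out.

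My expectation is that a genuine proof would require a novel structural input — perhaps a canonical, functorial scissors witness assigned to each equality in $K_0(\on{Var}_k)$, built by organizing Bittner moves into some category whose connected components match scissors equivalence — and absent such an input the conjecture appears to lie at roughly the depth of the zero-divisor question for $\mbb{L}$. This is presumably why the paper treats Conjecture \ref{cutandpasteconj} as a hypothesis to be \emph{assumed}, rather than attempting to prove it.
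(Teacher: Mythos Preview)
The statement you are attempting to prove is labeled a \emph{conjecture} in the paper, not a theorem; the paper offers no proof and instead assumes it as a hypothesis (in Proposition~\ref{stablebirathatkprop} and Corollary~\ref{msspfalsecor}). You recognize this yourself in your final paragraph, and that is the correct conclusion: there is nothing to compare your attempt against, because the paper makes no attempt.

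Your sketch does correctly isolate the essential obstruction. Bittner's presentation gives relations in a free abelian group on isomorphism classes, and an equality $[X]=[Y]$ there is witnessed only by a formal $\mbb{Z}$-linear combination of blow-up relations, not by a chain of geometric scissors moves. The cancellation that occurs when passing from the free abelian group to the quotient is precisely what one would have to undo, and there is no known way to do so. One small correction to your heuristic about $\mbb{L}$: a relation $\mbb{L}\cdot[X]=\mbb{L}\cdot[Y]$ with $[X]\neq[Y]$ would not by itself falsify the cut-and-paste conjecture, since the conjecture would only demand that $X\times\mbb{A}^1$ and $Y\times\mbb{A}^1$ be piecewise isomorphic, not $X$ and $Y$; so while Conjectures~\ref{cutandpasteconj} and~\ref{lisnotazerodivisorconj} are morally related, the implication you suggest does not go through directly.
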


\begin{conj}[Cancellation of the Lefschetz motive, {\cite[3.3]{denefloeser}}, {\cite[remarks after Assertion 1]{Cut-and-paste}}] \label{lisnotazerodivisorconj}
$\mbb{L}$ is not a zero divisor in $K(\on{Var}_k)$.
\end{conj}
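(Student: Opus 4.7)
The plan is to attempt a proof that if $\alpha \in K_0(\on{Var}_k)$ satisfies $\mbb{L}\alpha = 0$, then $\alpha = 0$. The first natural strategy is to search for realization functors under which $\mbb{L}$ has non-zero-divisor image. The Hodge--Deligne ($E$-polynomial) homomorphism $E: K_0(\on{Var}_\CC)\to \BZ[u,v]$ satisfies $E(\mbb{L}) = uv$, which is not a zero divisor in $\BZ[u,v]$, so $\mbb{L}\alpha = 0$ forces $E(\alpha) = 0$. Combined with $\ell$-adic and Chow-motive realizations, this would control $\alpha$ modulo the intersection of the kernels of all such realizations, and the remaining task would be to verify that multiplication by $\mbb{L}$ is still injective on that intersection.

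A second, more geometric approach uses Bittner's presentation (Theorem \ref{bittnerthm}) together with the Cut-and-Paste conjecture (Conjecture \ref{cutandpasteconj}). Writing $\alpha = \sum n_i[X_i]$ with the $X_i$ smooth and proper, and regrouping by sign, the relation $\mbb{L}\alpha = 0$ becomes an equality
$$\left[\bigsqcup_{n_i>0} X_i\times \BA^1\right] = \left[\bigsqcup_{n_j<0} X_j\times \BA^1\right]$$
in $K_0(\on{Var}_k)$. Under Cut-and-Paste this provides a piecewise isomorphism between these two disjoint unions via compatible stratifications. The natural next step is to try to rigidify this isomorphism so that it respects the projection to $\BA^1$; a generic fiber would then yield a piecewise isomorphism $\sqcup X_i \simeq \sqcup X_j$, giving $\alpha = 0$.

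The main obstacle, and the reason this is stated only as a conjecture, is that there is no \emph{a priori} reason a piecewise isomorphism $X\times \BA^1 \simeq Y\times \BA^1$ should be compatible with either projection: cancellation of an $\BA^1$-factor is essentially a birational cancellation problem, and known pathologies such as derived-equivalent non-birational Calabi--Yau pairs provide candidate obstructions to any naive descent. A realistic fallback would be either (i) to restrict to a subring of $K_0(\on{Var}_k)$ where these pathologies cannot arise (for instance, classes of uniruled varieties, where Bittner's blowup relations are more manageable), or (ii) to replace the conjecture by the weaker statement that $\mbb{L}$ is a non-zero-divisor modulo an explicit ideal capturing these pathologies --- a weakening that may still suffice for the intended applications to MSSP in Section 4.
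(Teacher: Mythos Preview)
The paper does not prove this statement at all: it is recorded purely as an open conjecture (Conjecture~\ref{lisnotazerodivisorconj}) and then used only as a \emph{hypothesis} in Proposition~\ref{assgradprop}, Corollary~\ref{zsbcor}, and Corollary~\ref{msspfalsecor}(b). There is therefore no proof in the paper against which to compare your proposal. You seem to have recognized this yourself, since your final paragraph explicitly acknowledges that the obstacles you describe are ``the reason this is stated only as a conjecture.''

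Your discussion of the difficulties is reasonable as far as it goes, and your instinct that realization homomorphisms can only show $\alpha$ lies in the common kernel---not that $\alpha=0$---is exactly the point. However, you should be aware that the conjecture is now known to be \emph{false}: Borisov (``The class of the affine line is a zero divisor in the Grothendieck ring,'' 2014) constructed an explicit nonzero class annihilated by $\mbb{L}$, using the Pfaffian--Grassmannian pair of derived-equivalent, non-birational Calabi--Yau threefolds. This is precisely the kind of pathology you allude to in your last paragraph, so your heuristic was on target even if the sign of the conclusion was not. Any attempt to salvage the applications in Section~4 must therefore proceed along the lines of your fallback~(ii), or via a direct argument in $\hat K$ that bypasses the conjecture entirely.
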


This latter conjecture is a common assumption for those working with the Grothendieck ring of varieties, e.g.  in \cite[Remark 16]{Cut-and-paste}, \cite[7.1]{lamy/sebag}.

We also record consequences of these conjectures which will be required later.

\begin{prop}[Stable birationality in $\hat K$] \label{stablebirathatkprop}
Let $X$ and $Y$ be irreducible varieties over $k$.  Then if the Cut-and-Paste Conjecture (Conjecture \ref{cutandpasteconj}) holds for $k$, we have that $$\frac{[X]}{\mbb{L}^{\on{dim}(X)}}=\frac{[Y]}{\mbb{L}^{\on{dim}(Y)}}$$ in $F^0K_0(\on{Var}_k)[\mbb{L}^{-1}]/F^{-1}K_0(\on{Var}_k)[\mbb{L}^{-1}]$ only if $X$ is stably birational to $Y$.
\end{prop}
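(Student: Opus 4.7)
The plan is to unwrap the hypothesis into a genuine equality in $K_0(\on{Var}_k)$ between two disjoint unions of varieties, apply the Cut-and-Paste Conjecture to obtain a common locally closed stratification of the two sides, and then compare top-dimensional strata to extract a birational equivalence between $X \times \mbb{A}^m$ and $Y \times \mbb{A}^n$.

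First I would unwind the hypothesis. The class $[X]/\mbb{L}^{\on{dim}(X)} - [Y]/\mbb{L}^{\on{dim}(Y)}$ lies in $F^{-1}K_0(\on{Var}_k)[\mbb{L}^{-1}]$, so it admits an expression $\sum_i \epsilon_i [Z_i]/\mbb{L}^{r_i}$ with $\epsilon_i \in \{\pm 1\}$ and each $Z_i$ of pure dimension at most $r_i - 1$. Multiplying through by $\mbb{L}^N$ for $N$ large clears all negative exponents, and a further multiplication by some $\mbb{L}^k$ passes from an identity in $K_0(\on{Var}_k)[\mbb{L}^{-1}]$ to an honest identity in $K_0(\on{Var}_k)$; moving the $\epsilon_i=-1$ summands across and using $\mbb{L}^s[Z] = [Z \times \mbb{A}^s]$, I arrive at an identity $[A]=[B]$ in $K_0(\on{Var}_k)$ of the shape
\[
A = (X \times \mbb{A}^{N+k-\on{dim}(X)}) \sqcup A^{\mathrm{low}}, \qquad B = (Y \times \mbb{A}^{N+k-\on{dim}(Y)}) \sqcup B^{\mathrm{low}},
\]
where the summands in $A^{\mathrm{low}}$ and $B^{\mathrm{low}}$ have the form $Z_i \times \mbb{A}^{N+k-r_i}$ and therefore dimension at most $N+k-1$.

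Next I would apply Cut-and-Paste to produce disjoint locally closed decompositions $A = \bigsqcup_j A_j$ and $B = \bigsqcup_j B_j$ with $A_j \simeq B_j$ for every $j$. The generic point $\eta$ of the irreducible variety $W_A := X \times \mbb{A}^{N+k-\on{dim}(X)}$ lies in some piece $A_{j_0}$, and its closure inside $A_{j_0}$ is an irreducible locally closed subvariety of $W_A$ containing $\eta$, hence a non-empty open subset $U \subseteq W_A$. Transporting $U$ across the isomorphism $A_{j_0} \simeq B_{j_0}$ gives an irreducible locally closed subvariety $V \subseteq B$ of dimension $N+k$; since the only irreducible component of $B$ of that dimension sits inside $W_B := Y \times \mbb{A}^{N+k-\on{dim}(Y)}$, the same argument forces $V$ to be a non-empty open subset of $W_B$. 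Thus $W_A$ and $W_B$ are birational, which by definition is stable birationality of $X$ and $Y$.

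The main obstacle is the clearing-of-denominators step: because $\mbb{L}$ is not known to be a non-zero-divisor, one cannot pass from an identity in $K_0(\on{Var}_k)[\mbb{L}^{-1}]$ to one in $K_0(\on{Var}_k)$ without the auxiliary factor $\mbb{L}^k$, which inflates the dimensions of all strata uniformly and must be tracked. Beyond that, the argument reduces to a careful dimension count, powered by the observation that an irreducible locally closed subvariety of top dimension inside a finite disjoint union of varieties is forced to be open in the unique top-dimensional summand.
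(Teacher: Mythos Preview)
Your argument is correct and follows essentially the same route as the paper: unwind the hypothesis, clear denominators to obtain an equality in $K_0(\on{Var}_k)$ between two varieties each having a unique top-dimensional irreducible component of the form $X\times\mbb{A}^m$ (resp.\ $Y\times\mbb{A}^n$), apply Cut-and-Paste, and compare top-dimensional pieces to conclude stable birationality. You are in fact slightly more careful than the paper in flagging the extra factor of $\mbb{L}^k$ needed to pass from an identity in $K_0(\on{Var}_k)[\mbb{L}^{-1}]$ to one in $K_0(\on{Var}_k)$ (the paper absorbs this silently into ``$N\gg 0$''), and in spelling out via generic points why the matching top-dimensional strata must be open in $W_A$ and $W_B$.
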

\begin{proof}
The idea of the argument is to translate this equality into an equality in $K_0(\on{Var}_k)$, and then apply the cut-and-paste conjecture there.

If $$\frac{[X]}{\mbb{L}^{\on{dim}(X)}}=\frac{[Y]}{\mbb{L}^{\on{dim}(Y)}}$$ in $F^0K_0(\on{Var}_k)[\mbb{L}^{-1}]/F^{-1}K_0(\on{Var}_k)[\mbb{L}^{-1}]$, there are equidimensional varieties $S_i$ such that $$\frac{[X]}{\mbb{L}^{\on{dim}(X)}}-\frac{[Y]}{\mbb{L}^{\on{dim}(Y)}}=\sum_{i\in I} a_i\frac{[S_i]}{\mbb{L}^{n_i}}$$ in $K_0(\on{Var}_k)[\mbb{L}^{-1}]$, where $n_i>\on{dim}(S_i), a_i\in \mathbb{Z}$, and $I$ is a finite index set.  Equivalently, for some $N\gg 0$, $$\mbb{L}^{N-\on{dim}(X)}[X]-\mbb{L}^{N-\on{dim}(Y)}[Y]=\sum_{i\in I} a_i\mbb{L}^{N-n_i}[S_i]$$ in $K_0(\on{Var}_k).$  Rearranging terms, we have for $J\subset I, J:=\{i\in I| a_i< 0\}$ that 
$$\mbb{L}^{N-\on{dim}(X)}[X]+\sum_{j\in J}|a_j|\mbb{L}^{N-n_j}[S_j]=\mbb{L}^{N-\on{dim}(Y)}[Y]+\sum_{i\in I\setminus J} |a_i|\mbb{L}^{N-n_i}[S_i].$$

Now, if the cut-and-paste conjecture holds, the equality above implies that we may write $$X'=(\mbb{A}^{N-\on{dim}(X)}\times X)\cup \bigcup_{j\in J}\mbb{A}^{N-n_j}\times (\underbrace{S_j\cup \cdots \cup S_j}_{|a_j| \text{ times}})$$
and 
$$Y'=(\mbb{A}^{N-\on{dim}(Y)}\times Y)\cup \bigcup_{i\in I\setminus J} \mbb{A}^{N-n_i}\times (\underbrace{S_i\cup \cdots \cup S_i}_{|a_i| \text{ times}})$$
as disjoint unions of isomorphic locally closed subsets.  That is, $X'$ and $Y'$ are equidecomposable.  But $X', Y'$ have exactly one connected component of dimension $N$---respectively, $\mbb{A}^{N-\on{dim}(X)}\times X$ and $\mbb{A}^{N-\on{dim}(Y)}\times Y$---and all other connected components have dimension less than $N$, as $n_i>\on{dim}(S_i)$.  So $\mbb{A}^{N-\on{dim}(X)}\times X, \mbb{A}^{N-\on{dim}(Y)}\times Y$ must be birational.  Thus $X$ and $Y$ are stably birational.
\end{proof}

We also have a similar result contingent on the truth of Conjecture \ref{lisnotazerodivisorconj}.
\begin{prop} \label{assgradprop}
Suppose $\mbb{L}$ is not a zero divisor in $K_0(\on{Var}_k)$ (Conjecture \ref{lisnotazerodivisorconj}).  Then $$F^0K_0(\on{Var}_k)[\mbb{L}^{-1}]/F^{-1}K_0(\on{Var}_k)[\mbb{L}^{-1}]\simeq K_0(\on{Var}_k)/\mbb{L}.$$
\end{prop}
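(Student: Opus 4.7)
The idea is to exhibit mutually inverse homomorphisms between $K_0(\on{Var}_k)/\mbb{L}$ and $F^0/F^{-1}$ (writing $F^i := F^iK_0(\on{Var}_k)[\mbb{L}^{-1}]$ for brevity), using Bittner's duality map $\mbb{D}$ from Corollary \ref{dualitycor} and its involutive extension $\mbb{D}': K_0(\on{Var}_k)[\mbb{L}^{-1}]\to K_0(\on{Var}_k)[\mbb{L}^{-1}]$.

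First I would record two preliminary facts. The filtration $F^\bullet$ is multiplicative ($F^i \cdot F^j \subseteq F^{i+j}$), immediate from $\on{dim}(X \times Y) = \on{dim}(X) + \on{dim}(Y)$. Also, $\mbb{L}^{-1} = [\{\on{pt}\}]/\mbb{L}$ lies in $F^{-1}$. Since $\mbb{D}([X]) = [X]/\mbb{L}^{\on{dim}(X)} \in F^0$ for $X$ smooth and proper (which generate $K_0(\on{Var}_k)$ by Theorem \ref{bittnerthm}), $\mbb{D}$ lands in $F^0$. Combining these with $\mbb{D}(\mbb{L}\alpha) = \mbb{L}^{-1} \mbb{D}(\alpha) \in F^{-1} \cdot F^0 \subseteq F^{-1}$, the map descends to $\bar{\mbb{D}}: K_0(\on{Var}_k)/\mbb{L} \to F^0/F^{-1}$.

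For the inverse, note that on generators $\mbb{D}'([X]/\mbb{L}^r) = [X]\,\mbb{L}^{r - \on{dim}(X)}$. Since $F^0$ is generated by such classes with $r \geq \on{dim}(X)$ and $F^{-1}$ by those with $r > \on{dim}(X)$, the image $\mbb{D}'(F^0)$ lies in the image of the localization map $\iota: K_0(\on{Var}_k) \to K_0(\on{Var}_k)[\mbb{L}^{-1}]$, and $\mbb{D}'(F^{-1}) \subseteq \iota(\mbb{L}\cdot K_0(\on{Var}_k))$. \emph{Here is where the hypothesis enters}: the assumption that $\mbb{L}$ is not a zero divisor is precisely the injectivity of $\iota$, which allows me to uniquely pull $\mbb{D}'(F^0)$ back to $K_0(\on{Var}_k)$, thereby defining $\bar{\mbb{D}}': F^0/F^{-1} \to K_0(\on{Var}_k)/\mbb{L}$.

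Finally, that $\bar{\mbb{D}}$ and $\bar{\mbb{D}}'$ are mutually inverse is a direct consequence of $\mbb{D}' \circ \mbb{D}' = \on{id}_{K_0(\on{Var}_k)[\mbb{L}^{-1}]}$, checked on the Bittner generators $[X]$ and $[X]/\mbb{L}^{\on{dim}(X)}$ respectively. The main obstacle, and the only place the non-zero-divisor hypothesis is used, is the well-definedness of $\bar{\mbb{D}}'$: without it, $\iota$ has nontrivial kernel, so one cannot canonically lift an element of $\iota(K_0(\on{Var}_k))$ back to $K_0(\on{Var}_k)$ modulo $\mbb{L}$, and $\bar{\mbb{D}}'$ simply does not exist as stated.
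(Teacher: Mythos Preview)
Your proposal is correct and follows essentially the same route as the paper: both construct the forward map from $\mbb{D}$ (checking $\mbb{D}(K_0(\on{Var}_k))\subseteq F^0$ and $\mbb{D}((\mbb{L}))\subseteq F^{-1}$), then use the involutive extension $\mbb{D}'$ together with the injectivity of the localization map (equivalent to $\mbb{L}$ not being a zero divisor) to build the inverse. Your write-up is in fact slightly more explicit than the paper's in verifying the two maps are mutually inverse; one small point worth stating is that your formula $\mbb{D}'([X]/\mbb{L}^r)=[X]\,\mbb{L}^{r-\dim(X)}$ is valid only for $X$ smooth and proper, so you are implicitly invoking Remark~\ref{smoothgeneration} to say such classes generate $F^0$ and $F^{-1}$.
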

\begin{proof}
We first define a map $f:  K_0(\on{Var}_k)/\mbb{L}\to F^0K_0(\on{Var}_k)[\mbb{L}^{-1}]/F^{-1}K_0(\on{Var}_k)[\mbb{L}^{-1}]$.  This map is induced by $\mbb{D}$; we need to show that the ideal $(\mbb{L})$ maps into $F^{-1}K_0(\on{Var}_k)[\mbb{L}^{-1}]$, and that the image of $\mbb{D}$ is contained in $F^0K_0(\on{Var}_k)[\mbb{L}^{-1}]$.  For the former statement, note that the ideal $(\mbb{L})$ is generated by elements of the form $\mbb{L}[X]$, where $X$ is smooth, connected, and proper.  Then $$\mbb{D}(\mbb{L}[X])=\frac{[X]}{\mbb{L}^{\on{dim}(X)+1}}\in F^{-1}K_0(\on{Var}_k)[\mbb{L}^{-1}]$$ as desired.  The latter statement follows analogously (indeed, $\mbb{D}((\mbb{L}^n))$ equals $F^{-n}K_0(\on{Var}_k)[\mbb{L}^{-1}]$ for all $n\geq 0$).

If $\mbb{L}$ is not a zero divisor, we may define an inverse map.  By the previous observation, $\mbb{D}'(F^0K_0(\on{Var}_k)[\mbb{L}^{-1}])$ is exactly the image of $K_0(\on{Var}_k)$ in $K_0(\on{Var}_k)[\mbb{L}^{-1}]$ via the natural inclusion; as $\mbb{L}$ is not a zero divisor, this image is isomorphic to $K_0(\on{Var}_k)$ itself.  So $\mbb{D}'$ induces a map $g: F^0K_0(\on{Var}_k)[\mbb{L}^{-1}]\to K_0(\on{Var}_k)$.  We need to check that $F^{-1}K_0(\on{Var}_k)$ maps to $(\mbb{L})$.  But the verification proceeds as in the previous paragraph.
\end{proof}
\begin{cor} \label{zsbcor}
If $\mbb{L}$ is not a zero divisor in $K_0(\on{Var}_k),$ $F^0K_0(\on{Var}_k)[\mbb{L}^{-1}]/F^{-1}K_0(\on{Var}_k)[\mbb{L}^{-1}]\simeq \mbb{Z}[SB]$.  If $X$ is smooth and proper, this isomorphism sends $[X]/\mbb{L}^{\on{dim}(X)}$ to the stable birational equivalence class of $X$.
\end{cor}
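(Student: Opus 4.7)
The plan is essentially to chain together the two isomorphisms already established in the excerpt. Proposition \ref{assgradprop} provides, under the hypothesis that $\mbb{L}$ is not a zero divisor, an isomorphism
\[
F^0K_0(\on{Var}_k)[\mbb{L}^{-1}]/F^{-1}K_0(\on{Var}_k)[\mbb{L}^{-1}]\simeq K_0(\on{Var}_k)/\mbb{L},
\]
and Theorem \ref{zsbthm} (Larsen--Lunts) supplies an isomorphism $sb: K_0(\on{Var}_k)/\mbb{L}\xrightarrow{\sim} \mbb{Z}[SB]$ sending the class of a smooth proper variety to its stable birational equivalence class. Composing these two isomorphisms yields the desired identification $F^0K_0(\on{Var}_k)[\mbb{L}^{-1}]/F^{-1}K_0(\on{Var}_k)[\mbb{L}^{-1}]\simeq \mbb{Z}[SB]$.

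To verify the stated image of $[X]/\mbb{L}^{\on{dim}(X)}$ for $X$ smooth and proper, one needs to unwind the maps. The isomorphism of Proposition \ref{assgradprop} was constructed from $\mbb{D}$, with inverse induced by $\mbb{D}'$. Since $\mbb{D}'([X]/\mbb{L}^{\on{dim}(X)}) = \mbb{D}'(\mbb{D}([X])) = [X]$ for $X$ smooth and proper (using $\mbb{D}'\circ \mbb{D}' = \on{id}$ and that $\mbb{D}([X]) = [X]/\mbb{L}^{\on{dim}(X)}$ by Corollary \ref{dualitycor}), the class $[X]/\mbb{L}^{\on{dim}(X)}$ maps to $[X] \bmod \mbb{L}$ in $K_0(\on{Var}_k)/\mbb{L}$. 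Applying $sb$ then sends this to the stable birational class of $X$, as required.

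There is no real obstacle here; the content lies entirely in the two ingredients already invoked, and the corollary is a bookkeeping consequence. The only thing to be attentive to is the direction of the isomorphism in Proposition \ref{assgradprop} and keeping track of which map (between $\mbb{D}$ and $\mbb{D}'$) is being applied, so that the image of $[X]/\mbb{L}^{\on{dim}(X)}$ is correctly identified with $[X] \bmod \mbb{L}$ rather than with some other representative.
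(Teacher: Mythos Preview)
Your proof is correct and follows exactly the approach of the paper, which simply states that the corollary ``follows immediately from Theorem~\ref{zsbthm} and Proposition~\ref{assgradprop}.'' You have additionally spelled out the tracking of $[X]/\mbb{L}^{\on{dim}(X)}$ through the maps, which the paper leaves implicit; this extra bookkeeping is correct (noting that $\mbb{D}'$ agrees with $\mbb{D}$ on the image of $K_0(\on{Var}_k)$, so $\mbb{D}'(\mbb{D}([X]))=\mbb{D}'(\mbb{D}'([X]))=[X]$).
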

\begin{proof}
This follows immediately from Theorem \ref{zsbthm} and Proposition \ref{assgradprop}.
\end{proof}
\begin{rem}\label{completions}
Note that $\mathbb{D}$ extends to a continuous surjection $\mathbb{D}: R[\mathbb{L}^{-1}]\to\hat K$; if $\mathbb{L}$ is not a zero divisor, the methods of Proposition \ref{assgradprop} show that this is an isomorphism.  As claimed in Section 2, for a smooth projective curve $X$, $$\mbb{D}([\on{Sym}^n(X)])=\frac{[\on{Sym}^n(X)]}{\mathbb{L}^n}.$$  For a smooth projective surface $S$, $$\mbb{D}([\on{Hilb}^n(S)])= \frac{[\on{Sym}^n(S)]}{\mathbb{L}^{2n}} \bmod F^{-1}K_0(\on{Var}_k)[\mbb{L}^{-1}],$$
where we use that $\on{Hilb}^n(S)$ is smooth and projective for $S$ smooth and projective \cite[p. 167]{FGA explained}.
\end{rem}
\section{Stable Birationality Of Symmetric Powers}
The main geometric content of this section is the following:
\begin{thm} \label{msspfailsthm}
Let $X$ be a smooth connected projective surface with $h^0(X, \omega_X)\not=0$.  Let $m$ be a non-negative integer.  Then for all sufficiently large $n$, $\on{Sym}^n(X)$ is not stably birational to $\on{Sym}^m(X)$.
\end{thm}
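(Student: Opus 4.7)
The plan is to represent the stable birational class of $\on{Sym}^n X$ by the smooth projective variety $\on{Hilb}^n X$, which is a resolution of $\on{Sym}^n X$ via the Hilbert--Chow morphism and hence birational to it. Then I would distinguish $\on{Hilb}^n X$ from $\on{Hilb}^m X$ using the stable-birational invariants $h^{k, 0}$; specifically, by showing $h^{2n, 0}(\on{Hilb}^n X) \geq 1$ while $h^{2n, 0}(\on{Hilb}^m X) = 0$ for all $n > m$ (which is stronger than just ``sufficiently large $n$'').

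For the key computation, I would show $h^{2n, 0}(\on{Hilb}^n X) = \binom{p_g(X) + n - 1}{n}$, which is at least $1$ under the hypothesis $p_g(X) \geq 1$. The smooth locus of $\on{Sym}^n X$ is $(X^n \setminus \Delta)/S_n$ where $\Delta$ is the big diagonal, the Hilbert--Chow map is an isomorphism over this locus, and its complement in $\on{Hilb}^n X$ has codimension $2$. So by Hartogs combined with étale descent along the free quotient $X^n \setminus \Delta \to (X^n \setminus \Delta)/S_n$, we have $H^0(\on{Hilb}^n X, \omega) = H^0(X^n, \omega_{X^n})^{S_n}$. Since $\omega_X$ has even degree, the permutation action of $S_n$ on the tensor factors carries no signs, so this invariant subspace is $\on{Sym}^n H^0(X, \omega_X)$, of the stated dimension.

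Next I would observe that $h^{k, 0}$ is a stable-birational invariant for smooth projective varieties: birational invariance follows from weak factorization together with the blow-up formula for Hodge numbers, whose correction terms involve $h^{a, b}(Z)$ with $b \geq 1$ and hence vanish when $q = 0$; and invariance under $Y \mapsto Y \times \mbb{P}^r$ is immediate from Künneth, since $h^{j, 0}(\mbb{P}^r) = 0$ for $j \geq 1$.

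The theorem then follows: for any $n > m$, $\on{Hilb}^m X$ has complex dimension $2m < 2n$, so $h^{2n, 0}(\on{Hilb}^m X) = 0$, while $h^{2n, 0}(\on{Hilb}^n X) \geq 1$, and thus the two are not stably birational. Neither then are $\on{Sym}^n X$ and $\on{Sym}^m X$. I do not anticipate any serious obstacle; the main subtlety is the identification of global top forms on $\on{Hilb}^n X$ with $S_n$-invariant top forms on $X^n$, and this is essentially a consequence of crepancy of Hilbert--Chow and the codimension-$2$ extension of holomorphic forms.
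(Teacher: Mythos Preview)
Your approach is genuinely different from the paper's and, modulo one slip, correct.  The paper argues via Mumford's bound on families of rationally equivalent $0$-cycles: a stable birational equivalence between $\on{Sym}^n(X)$ and $\on{Sym}^m(X)$ would produce, through a general point of $\on{Sym}^n(X)$, a unirational subvariety of dimension $2n-2m$ consisting of rationally equivalent cycles, contradicting Mumford's theorem once $n>2m$.  Your argument via the stable birational invariant $h^{2n,0}$ is more elementary and in fact sharper --- it gives the conclusion for every $n>m$, not just $n>2m$.  It is also closer in spirit to the approach behind Proposition~\ref{melanieprop}, which likewise distinguishes the $\on{Sym}^n(X)$ by numerical birational invariants rather than via Chow-theoretic geometry.

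The slip: your claim that the complement of the open locus $(X^n\setminus\Delta)/S_n$ in $\on{Hilb}^n(X)$ has codimension $2$ is false --- the exceptional locus of the Hilbert--Chow morphism is a divisor.  So Hartogs alone does not extend a top form from the open locus to all of $\on{Hilb}^n(X)$; the direction you actually need (producing a nonzero section of $\omega_{\on{Hilb}^n X}$ from one on $X^n$) is exactly the one this would have given.  What rescues the argument is precisely the crepancy you mention at the end: since a transposition swaps two copies of a $2$-dimensional tangent space and hence acts with determinant $+1$, the group $S_n$ acts on $X^n$ through $\on{SL}$ at every fixed point, $\on{Sym}^n(X)$ has Gorenstein canonical singularities, and the Hilbert--Chow resolution is crepant.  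Then $H^0(\on{Hilb}^n X,\omega)=H^0(\on{Sym}^n X,\omega_{\on{Sym}^n X})=H^0(X^n,\omega_{X^n})^{S_n}$, with the codimension-$2$ extension now applied on the $X^n$ side (where $\Delta$ genuinely has codimension $2$).  With that correction your proof goes through.
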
 

The idea of the proof is to produce a moving family of unirational subvarieties of $\on{Sym}^n(X)$ of high dimension, contradicting the following theorem of Mumford.

\begin{thm}[Mumford, \cite{mumford}, corollary on page 203] \label{mumfordthm}
There exists a codimension-one subvariety $W$ of $\on{Sym}^n(X)$ so that if $Y\subset \on{Sym}^n(X)\setminus W$ consists entirely of rationally equivalent $0$-cycles, then $Y$ has dimension at most $n$.
\end{thm}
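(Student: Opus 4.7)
The plan is to use a non-zero holomorphic 2-form on $X$ to produce a nondegenerate 2-form $\omega^{(n)}$ on the complement of a codimension-one subvariety of $\on{Sym}^n X$, show that any subvariety consisting of rationally equivalent 0-cycles must be isotropic for $\omega^{(n)}$, and then conclude by linear algebra. Concretely, pick $0\neq \omega\in H^0(X,\omega_X)=H^0(X,\Omega^2_X)$ with zero divisor $Z(\omega)$ (possibly empty), form the $S_n$-invariant 2-form $\sum_{i=1}^n \pi_i^*\omega$ on $X^n$, and let $\omega^{(n)}$ be its descent to the smooth locus of $\on{Sym}^n X$. I would take
\[
W := \Delta \;\cup\; \bigl\{[x_1+\cdots+x_n]:x_i\in Z(\omega) \text{ for some }i\bigr\},
\]
where $\Delta\subset \on{Sym}^n X$ is the big diagonal; this is closed of codimension one. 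At a point $[x_1+\cdots+x_n]\notin W$ the tangent space of $\on{Sym}^n X$ is identified with $\bigoplus_i T_{x_i}X$ via the \'etale quotient map, and in that decomposition $\omega^{(n)}$ is block diagonal with blocks $\omega(x_i)$, each nondegenerate of rank 2. Hence $\omega^{(n)}$ has maximal rank $2n$ throughout $\on{Sym}^n X\setminus W$.

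Granting the key vanishing---that for any irreducible $Y\subset \on{Sym}^n X\setminus W$ whose points parametrize mutually rationally equivalent 0-cycles one has $\omega^{(n)}|_Y\equiv 0$---the conclusion is immediate: each tangent space $T_yY$ is an isotropic subspace of the nondegenerate skew form $\omega^{(n)}_y$ on the $2n$-dimensional space $T_y\on{Sym}^n X$, so $\dim Y\leq n$. The vanishing itself rests on the observation that rational equivalence is generated by ``elementary moves'' parametrized by $\mathbb{P}^1$, and that any pullback $f^*\omega^{(n)}$ along a morphism $f\colon \mathbb{P}^1\to \on{Sym}^n X$ lies in $H^0(\mathbb{P}^1,\Omega^2)=0$. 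To promote this pointwise statement to the infinitesimal vanishing $\omega^{(n)}|_Y\equiv 0$, after shrinking $Y$ and passing to an \'etale cover if needed, I would build a smooth relative curve $p\colon \mathcal C\to Y$ and a morphism $g\colon \mathcal C\to X$ realizing, over each $y\in Y$, a rational equivalence between the cycle parametrized by $y$ and a fixed reference cycle. The pullback $g^*\omega$ is a section of $\Omega^2_{\mathcal C}$, whose pure relative part $\Omega^2_{\mathcal C/Y}$ vanishes for dimension reasons, forcing $g^*\omega$ into the mixed piece $p^*\Omega^1_Y\otimes \Omega^1_{\mathcal C/Y}$; push-forward along $p$ together with an unwinding of the construction of $\omega^{(n)}$ then identifies $\omega^{(n)}|_Y$ with the vanishing pure relative component.

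The main obstacle is exactly this last step: making the family-of-rational-equivalences construction precise and uniform over $Y$, and carrying out the diagram chase that recovers $\omega^{(n)}|_Y$ from the vanishing piece of $g^*\omega$. Mumford's original argument in the cited reference handles this via analytic calculations with holomorphic forms, leveraging the Hodge-theoretic input that $H^0(C,\Omega^2)=0$ for every curve $C$; I would follow that route to keep the argument self-contained. Subtleties to address along the way include ensuring the connecting curves witnessing the rational equivalences can be chosen algebraically in a family (possibly only after \'etale base change), and verifying that reducibility or multiple components of these curves do not obstruct the final computation relating the push-forward of $g^*\omega$ to $\omega^{(n)}|_Y$.
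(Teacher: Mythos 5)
Your proposal follows precisely the ``symplectic argument'' that the paper attributes to Mumford and briefly sketches after the theorem: descend a generically nondegenerate $2$-form on $\on{Sym}^n X$ from a nonzero $\omega\in H^0(X,\omega_X)$, show that a family of mutually rationally equivalent $0$-cycles is tangent to an isotropic subspace of this form, and conclude $\dim Y\leq n$ by linear algebra. The paper itself cites this theorem from \cite{mumford} without reproducing the proof, so your level of detail---including the honestly flagged gap at the key vanishing step, which is where the real work in Mumford's and Voisin's treatments lives---is consistent with the paper's own treatment.
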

Mumford's proof uses the so-called ``symplectic argument" \cite[Chapter 10]{voisin}.  The idea is to show that a connected subvariety of $\on{Sym}^n(X)$ consisting of rationally equivalent zero-cycles and containing a generic $0$-cycle must lie tangent to an isotropic subspace of any two-form on $\on{Sym}^n(X)$.  One can construct generically non-degenerate two-forms on the smooth locus of $\on{Sym}^n(X)$ given any non-zero two-form on $X$, giving an upper bound on the dimension of most such varieties.
\begin{proof}[Proof of Theorem \ref{msspfailsthm}]
Without loss of generality $n>m$.  Assume for the sake of contradiction that $\on{Sym}^n(X)\times \mbb{A}^l$ is birational to $\on{Sym}^m(X)\times \mbb{A}^{2n-2m+l}$ for some $l$.  Then there exists a variety $U$ which may be embedded as a dense open subvariety of both $\on{Sym}^n(X)\times \mbb{A}^l$ and $\on{Sym}^m(X)\times \mbb{A}^{2n-2m+l}.$

A non-empty fiber of the projection map $\pi_m: U\to \on{Sym}^m(X)$ is a dense open subset of  $\mbb{A}^{2n-2m+l}$, and is thus of dimension $2n-2m+l$.  Choosing any $k$-point $x$ in the image of the \emph{other} projection map $\pi_n: U\to \on{Sym}^n(X)$ there exists some $k$-point $y\in \on{Sym}^m(X)$ so that $x\in \pi_n(\pi_m^{-1}(y))$.

Choosing $x\in \on{Sym}^n(X)$ lying away from the subvariety $W$ of $\on{Sym}^n(X)$ coming from Theorem \ref{mumfordthm}, we choose $y$ as above and let $Y=\pi_n(\pi_m^{-1}(y))$.  As $\pi_m^{-1}(y)$ is an open subset of affine space, $Y$ is unirational; furthermore $Y$ has dimension $2n-2m$, as the non-empty fibers of $\pi_n$ have dimension $l$.  As $Y$ is unirational, points in it correspond to rationally equivalent $0$-cycles.  So for $n>2m$, $Y\setminus (Y\cap W)$ is a subvariety of $\on{Sym}^n(X)\setminus W$ consisting of rationally equivalent $0$-cycles, of dimension larger than $n$, which contradicts Theorem \ref{mumfordthm}.
\end{proof}

\begin{cor}[False Claims \ref{False Claim 1} and \ref{False Claim 2} are false] \label{falseclaimsarefalsecor}
Let $X$ be as in Theorem \ref{msspfailsthm}.  Then the classes $[\on{Sym}^n(X)]$ do not stabilize in $K_0(\on{Var}_k)/\mbb{L}$.
\end{cor}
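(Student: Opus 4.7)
The plan is to combine the Larsen--Lunts isomorphism $sb: K_0(\on{Var}_k)/\mathbb{L} \xrightarrow{\sim} \mathbb{Z}[SB]$ (Theorem \ref{zsbthm}) with Theorem \ref{msspfailsthm}. Assume for contradiction that the classes $[\on{Sym}^n(X)]$ stabilize modulo $\mathbb{L}$; then there exists $m$ with $[\on{Sym}^n(X)] \equiv [\on{Sym}^m(X)] \pmod{\mathbb{L}}$ for all $n \geq m$. I will push this equality through $sb$ to derive that $\on{Sym}^n(X)$ is stably birational to $\on{Sym}^m(X)$ for all sufficiently large $n$, contradicting Theorem \ref{msspfailsthm}.

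The subtlety is that $\on{Sym}^n(X)$ is singular for $n \geq 2$, so $sb([\on{Sym}^n(X)])$ is a priori a $\mathbb{Z}$-linear combination of stable birational classes rather than a single one. To identify it I pass through the Hilbert--Chow morphism $h_n: \on{Hilb}^n(X) \to \on{Sym}^n(X)$, a birational morphism from the smooth projective variety $\on{Hilb}^n(X)$ onto $\on{Sym}^n(X)$. The crucial claim is
$$[\on{Hilb}^n(X)] \equiv [\on{Sym}^n(X)] \pmod{\mathbb{L}} \quad \text{in } K_0(\on{Var}_k),$$
whence $sb([\on{Sym}^n(X)]) = \{\on{Hilb}^n(X)\}_{sb}$ in $\mathbb{Z}[SB]$. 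The stabilization assumption then forces $\on{Hilb}^n(X)$ to be stably birational to $\on{Hilb}^m(X)$ for all $n \geq m$, and since $h_n$ is birational this propagates to $\on{Sym}^n(X)$, yielding the desired contradiction with Theorem \ref{msspfailsthm}.

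The main obstacle is the congruence above. I would prove it by stratifying $\on{Sym}^n(X)$ by partition type: over the open stratum of multiplicity-free cycles $h_n$ is an isomorphism, while over each deeper stratum the fibers are products of positive-dimensional punctual Hilbert schemes of $\mathbb{A}^2$, which are rational and thus have class congruent to that of a point modulo $\mathbb{L}$. Granting \'etale-local triviality of $h_n$ over each stratum (the Zariski-local version may fail), each deeper-stratum contribution to $[\on{Hilb}^n(X)] - [\on{Sym}^n(X)]$ lies in $(\mathbb{L})$. As a sanity check, in the case $n=2$ Hilbert--Chow is exactly the blowup of $\on{Sym}^2(X)$ along the diagonal $\Delta_X \simeq X$ with exceptional divisor a $\mathbb{P}^1$-bundle, giving $[\on{Hilb}^2(X)] - [\on{Sym}^2(X)] = \mathbb{L}\cdot [X]$, which is the desired pattern.
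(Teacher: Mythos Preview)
Your overall architecture is exactly the paper's: reduce to the congruence $[\on{Hilb}^n(X)]\equiv[\on{Sym}^n(X)]\pmod{\mbb{L}}$, then use that $\on{Hilb}^n(X)$ is smooth projective so that $sb$ sends its class to its stable birational equivalence class, and invoke Theorem~\ref{msspfailsthm}. The only difference is how the congruence is obtained. The paper simply reads it off G\"ottsche's formula (Theorem~\ref{gottschethm}): the unique term with $|\alpha|=n$ is $\alpha=(1^n)$, giving $[\on{Sym}^n(X)]$, and every other term carries a positive power of $\mbb{L}$.

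Your proposed direct proof of the congruence has a genuine gap at the step you yourself flag. \'Etale-local triviality of $E\to B$ with fiber $F$ does \emph{not} imply $[E]=[B][F]$ in $K_0(\on{Var}_k)$, nor even $[E]\equiv[B]\pmod{\mbb{L}}$ when $F$ is rational. A Brauer--Severi variety $E\to B$ over a surface with nontrivial Brauer class is an \'etale $\mbb{P}^{r-1}$-bundle with $E$ smooth projective, yet $E$ is not stably birational to $B$, so $sb([E])\neq sb([B])$ and hence $[E]\not\equiv[B]\pmod{\mbb{L}}$. Rationality of the fiber is therefore not the right hypothesis; what one needs is that the fibration over each stratum is \emph{Zariski}-locally trivial (or, equivalently for this purpose, piecewise trivial). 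Contrary to your parenthetical, this Zariski triviality \emph{does} hold here: the Ellingsrud--Str\o mme cell decomposition of the punctual Hilbert scheme $\on{Hilb}^n_0(\mbb{A}^2)$ globalizes to give a stratification of $h_n^{-1}(S_\alpha)$ by Zariski-locally trivial affine-space bundles over $S_\alpha$, which is precisely the mechanism behind G\"ottsche's theorem. So your stratification sketch can be completed, but the completion \emph{is} G\"ottsche's argument; citing Theorem~\ref{gottschethm} is the clean way to close the gap.
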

Before proving this Corollary, we will need the following result of G\"ottsche.
\begin{thm}[G\"ottsche, {\cite[Theorem 1.1]{gottsche}}] \label{gottschethm}
Let $S$ be a smooth projective surface over $k$.  Then in $K_0(\on{Var}_k)$, we have $$[\on{Hilb}^n(S)]=\sum_{(1^{\alpha_1}, ..., n^{\alpha_n})\in P(n)} \mbb{L}^{n-|\alpha|}\prod_{i=1}^n[\on{Sym}^{\alpha_i}(S)].$$ Here $\on{Hilb}^n(X)$ the Hilbert scheme of length $n$ subschemes of $X$ and $P(n)$ is the set of partitions of $n$.  If $\alpha\in P(n)$ we write $\alpha=(1^{\alpha_1}, 2^{\alpha_2}, \cdots, n^{\alpha_n})$, and define $|\alpha|=\sum_i \alpha_i$.  
\end{thm}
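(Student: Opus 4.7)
The plan is to stratify $\on{Hilb}^n(S)$ via the Hilbert--Chow morphism $h: \on{Hilb}^n(S) \to \on{Sym}^n(S)$ and the cell decomposition of punctual Hilbert schemes. The key local input is: by the Bia\l{}ynicki-Birula decomposition arising from a generic $\mbb{G}_m^2$-action on $\mbb{A}^2$ (Ellingsrud--Str{\o}mme), the punctual Hilbert scheme $\on{Hilb}^i(\mbb{A}^2, 0)$ of length-$i$ subschemes supported at the origin admits an algebraic cell decomposition whose cells are indexed by partitions $\mu$ of $i$ and have dimension $i - \ell(\mu)$, where $\ell(\mu)$ is the number of parts of $\mu$. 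In particular $[\on{Hilb}^i(\mbb{A}^2, 0)] = \sum_{\mu \vdash i} \mbb{L}^{i - \ell(\mu)}$ in $K_0(\on{Var}_k)$.

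With this in hand, I would stratify $\on{Sym}^n(S)$ by support type: for a partition $\alpha = (1^{\alpha_1}, \ldots, n^{\alpha_n})$ of $n$, let $\on{Sym}^n(S)^{\circ}_\alpha$ denote the locally closed subvariety of $0$-cycles having exactly $\alpha_i$ distinct points of multiplicity $i$. Since $S$ is a smooth surface, over each such stratum $h$ is a Zariski-locally trivial fibration with fiber $\prod_i \on{Hilb}^i(\mbb{A}^2, 0)^{\alpha_i}$; using the scissor relations this yields the raw identity
\[
[\on{Hilb}^n(S)] = \sum_{\alpha \vdash n} [\on{Sym}^n(S)^{\circ}_\alpha] \prod_i [\on{Hilb}^i(\mbb{A}^2, 0)]^{\alpha_i}.
\]

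The final step is a combinatorial rearrangement. The cleanest packaging is the generating function identity
\[
\sum_n [\on{Hilb}^n(S)] t^n = \prod_{k \geq 1} Z^{mot}_S(\mbb{L}^{k-1} t^k),
\]
with $Z^{mot}_S(t) = \sum_n [\on{Sym}^n(S)] t^n$ the Kapranov zeta function of Section~2; extracting the coefficient of $t^n$ on the right recovers the claimed formula. The main obstacle is the bookkeeping required to pass from the open strata $\on{Sym}^n(S)^{\circ}_\alpha$, which carry pairwise-disjointness conditions on the supports of the multiplicity-$i$ parts, to the unrestricted products $\prod_i [\on{Sym}^{\alpha_i}(S)]$ on the right-hand side. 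I would handle this by refining the stratification further, assigning to each support point the Bia\l{}ynicki-Birula type $\mu$ of its local subscheme and then regrouping refined strata according to the multiset $\{\mu\}$ that occurs; contributions from loci where supports collide across distinct multiplicity-$i$ parts are absorbed into lower-dimensional cells of the finer stratification, so that the resulting sum reassembles exactly into the right-hand side of the claimed formula.
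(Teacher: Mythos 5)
The paper does not actually prove this statement --- it is quoted verbatim from G\"ottsche's 2001 paper, which the author cites directly. So the relevant comparison is with G\"ottsche's original argument, and your outline does in fact reproduce its main ingredients: the Ellingsrud--Str\o{}mme cell decomposition of the punctual Hilbert scheme, the Zariski-local triviality of the Hilbert--Chow morphism over the support strata of $\on{Sym}^n(S)$, and a combinatorial reassembly packaged as the product formula $\sum_n [\on{Hilb}^n(S)]t^n=\prod_{k\ge1}Z^{mot}_S(\mathbb{L}^{k-1}t^k)$. Your unpacking of the right-hand side is correct: the coefficient of $t^n$ is $\sum_{\alpha\vdash n}\mathbb{L}^{n-|\alpha|}\prod_i[\on{Sym}^{\alpha_i}(S)]$, using $\sum_k(k-1)\alpha_k=n-|\alpha|$.

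The one place where your write-up is materially below the level of a proof is the final ``absorption'' paragraph. The actual mechanism is more specific than collisions being ``absorbed into lower-dimensional cells''; it is a clean bijection of strata. On one side, expand each factor $[\on{Hilb}^i(\mathbb{A}^2,0)]^{\alpha_i}$ into $\left(\sum_{\mu\vdash i}\mathbb{L}^{i-\ell(\mu)}\right)^{\alpha_i}$, so that the contribution of the stratum $\alpha$ becomes a sum over multisets $M=\{\mu_1,\dots,\mu_r\}$ of partitions with $\sum|\mu_j|=n$ of $[C_M(S)]\,\mathbb{L}^{n-\sum\ell(\mu_j)}$, where $C_M(S)$ is the configuration space of $r$ distinct points of $S$ labelled by the multiset $M$. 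On the other side, stratify $\prod_i\on{Sym}^{\alpha_i}(S)$ by recording, at each point $p$ in the union of supports, the vector $v_p\in\mathbb{Z}_{\ge0}^n$ of local multiplicities in the components $D_1,\dots,D_n$; this writes $\prod_i[\on{Sym}^{\alpha_i}(S)]$ as a sum of $[C_V(S)]$ over multisets $V$ of nonzero vectors with $\sum_{v\in V}v_i=\alpha_i$. The dictionary $\mu\leftrightarrow v$ sending a partition to its multiplicity vector is a bijection that converts $|\mu|$ into $\sum_i iv_i$ and $\ell(\mu)$ into $\sum_i v_i$, hence matches $\mathbb{L}^{n-\sum\ell(\mu_j)}$ with $\mathbb{L}^{n-|\alpha|}$ term by term, and the two stratified sums coincide. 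That precise matching is the heart of G\"ottsche's proof and should be spelled out. You should also flag that the Zariski-local triviality of the Hilbert--Chow map over each stratum (and the independence of the local factor $[\on{Hilb}^i(\mathbb{A}^2,0)]$ from the choice of formal coordinates at each support point, which is needed to make the refined stratification well-defined in $K_0$) is an input that itself requires proof, not something that follows for free from smoothness of $S$.
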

An immediate consequence is that $$[\on{Hilb}^n(S)]=[\on{Sym}^n(S)] \bmod \mbb{L}.$$
\begin{proof}[Proof of Corollary \ref{falseclaimsarefalsecor}]
As $\on{Hilb}^n(X)$ is birational to $\on{Sym}^n(X)$ \cite[p. 161]{FGA explained} via the Hilbert-Chow morphism, we have from Theorem \ref{msspfailsthm} that $\on{Hilb}^n(X)$ is not stably birational to $\on{Hilb}^m(X)$ for $n\gg m$.  Furthermore, as $X$ is a smooth projective surface, $\on{Hilb}^n(X)$ is smooth and projective \cite[p. 167]{FGA explained}, so $[\on{Hilb}^n(X)]$ maps to its stable birational equivalence class via the isomorphism $sb: K_0(\on{Var}_k)/\mbb{L}\to \mbb{Z}[SB]$.  But the images of the $\on{Hilb}^n(X)$ in $\mbb{Z}[SB]$ do not stablize, so the classes $[\on{Hilb}^n(X)]$ in $K_0(\on{Var}_k)/\mbb{L}$ do not stabilize either.  Finally, $$[\on{Hilb}^n(X)]=[\on{Sym}^n(X)] \bmod \mbb{L},$$ so the classes $[\on{Sym}^n(X)]$ do not stabilize, as desired.  
\end{proof}
\begin{cor}[Conditional falsity of MSSP] \label{msspfalsecor}
If either (a) Conjecture \ref{cutandpasteconj} (Cut-and-Paste) or (b) Conjecture \ref{lisnotazerodivisorconj} ($\mbb{L}$ is not a zero divisor) hold for $k$, then MSSP fails for $X$ as in Theorem \ref{msspfailsthm}.
\end{cor}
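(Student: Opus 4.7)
The plan is to suppose MSSP holds for the surface $X$ of Theorem \ref{msspfailsthm} and to derive a contradiction with Theorem \ref{msspfailsthm} under each of hypotheses (a) and (b) separately. First I would unpack MSSP: existence of $\lim_{n\to\infty} [\on{Sym}^n X]/\mbb{L}^{2n}$ in $\hat K$ in particular forces the images in the single graded piece $F^0 K_0(\on{Var}_k)[\mbb{L}^{-1}]/F^{-1}K_0(\on{Var}_k)[\mbb{L}^{-1}]$ to stabilize for $n \gg 0$, i.e.\ $[\on{Sym}^n X]/\mbb{L}^{2n} \equiv [\on{Sym}^m X]/\mbb{L}^{2m} \bmod F^{-1}K_0(\on{Var}_k)[\mbb{L}^{-1}]$ for all $n, m \gg 0$. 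The remainder of the argument is to convert this equality in the graded piece into a genuine stable birational equivalence of symmetric powers, which Theorem \ref{msspfailsthm} forbids.

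Under hypothesis (a) the translation is immediate: since $X$ is connected, the symmetric powers $\on{Sym}^n X$ and $\on{Sym}^m X$ are irreducible, so Proposition \ref{stablebirathatkprop} applies directly and forces $\on{Sym}^n X$ to be stably birational to $\on{Sym}^m X$ for all $n, m \gg 0$, contradicting Theorem \ref{msspfailsthm}.

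Under hypothesis (b) I would instead work through Corollary \ref{zsbcor}, which identifies $F^0 K_0(\on{Var}_k)[\mbb{L}^{-1}]/F^{-1}K_0(\on{Var}_k)[\mbb{L}^{-1}]$ with $\mbb{Z}[SB]$ via $[Y]/\mbb{L}^{\dim Y} \mapsto [Y]_{SB}$ for $Y$ smooth and proper. Since $\on{Sym}^n X$ is singular in general, I would route through $\on{Hilb}^n X$: by Remark \ref{completions} (an immediate consequence of G\"ottsche's formula in Theorem \ref{gottschethm}, as all partitions other than $(1^n)$ contribute terms with an extra factor of $\mbb{L}$ with no compensating increase in dimension, hence land in $F^{-1}$), the image of $[\on{Sym}^n X]/\mbb{L}^{2n}$ in the graded piece coincides with that of $\mbb{D}([\on{Hilb}^n X]) = [\on{Hilb}^n X]/\mbb{L}^{2n}$. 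Since $\on{Hilb}^n X$ is smooth projective of dimension $2n$ and birational (hence stably birational) to $\on{Sym}^n X$, the assumed stabilization of $[\on{Sym}^n X]/\mbb{L}^{2n}$ modulo $F^{-1}$ becomes, under the isomorphism with $\mbb{Z}[SB]$, stabilization of the stable birational class of $\on{Sym}^n X$---again contradicting Theorem \ref{msspfailsthm}.

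The real content of the corollary lies entirely in Theorem \ref{msspfailsthm} and in the two translation results (Proposition \ref{stablebirathatkprop} and Corollary \ref{zsbcor}) from Section 3; the main subtlety in executing the plan is bookkeeping around the filtration $F^\bullet$ together with the fact that $\on{Sym}^n X$ is not smooth, which is what forces the detour through $\on{Hilb}^n X$ in case (b). I do not anticipate any serious obstacle beyond this.
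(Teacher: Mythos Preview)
Your proposal is correct and follows essentially the same route as the paper: reduce MSSP to stabilization in the graded piece $F^0/F^{-1}$, then invoke Proposition~\ref{stablebirathatkprop} for (a) and Corollary~\ref{zsbcor} together with the detour through $\on{Hilb}^n X$ for (b), contradicting Theorem~\ref{msspfailsthm}. The only cosmetic difference is that you justify $[\on{Sym}^n X]/\mbb{L}^{2n}\equiv[\on{Hilb}^n X]/\mbb{L}^{2n}\bmod F^{-1}$ via G\"ottsche's formula, whereas the paper deduces it directly from the birationality of $\on{Hilb}^n X$ and $\on{Sym}^n X$ (birational varieties of equal dimension differ by lower-dimensional pieces); either justification works.
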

\begin{proof}
The fact that (a) Conjecture \ref{cutandpasteconj} implies the failure of MSSP follows immediately from Theorem \ref{msspfailsthm} and Proposition \ref{stablebirathatkprop}.  For (b), note that $$\frac{[\on{Hilb}^n(X)]}{\mbb{L}^{2n}}=\frac{[\on{Sym}^n(X)]}{\mbb{L}^{2n}}$$ in $$F^0K_0(\on{Var}_k)[\mbb{L}^{-1}]/F^1K_0(\on{Var}_k)[\mbb{L}^{-1}]$$ as $\on{Sym}^n(X)$ and $\on{Hilb}^{n}(X)$ are birational.  But $\on{Hilb}^n(X)$ is smooth and proper, so its image in $\mbb{Z}[SB]$ via the map defined in Proposition \ref{assgradprop} and Corollary \ref{zsbcor} is its stable birational equivalence class.  These images do not stabilize, by Theorem \ref{msspfailsthm}.
\end{proof}
\section{Heuristics for Failure of MSSP, and Newton Polygons}
In this final section, we will describe a heuristic predicting for which $X$ False Claim \ref{False Claim 2} (and conditionally MSSP) will fail, in terms of the Hodge numbers of $X$; for $X$ a curve, we will give some evidence for this claim.  

\begin{defn}[Hodge polygon]
Let $X/\mathbb{C}$ be a smooth proper variety.  Then the $m$-dimensional Hodge polygon of $X$ is the graph of the unique continuous piecewise linear function $\ell_{X, m}$ on $[0, \sum_{i+j=m} h^{i,j}]$ whose slope on $$[h^{0, m}+h^{1, m-1}+\cdots+h^{j-1, m-j+1}, h^{0, m}+h^{1, m-1}+\cdots+h^{j, m-j}]$$ is $j$, and which satisfies $\ell_{X,m}(0)=0$.
\end{defn}
For example, the $1$-dimensional Hodge polygon of a smooth proper curve of genus $g$ has slope $0$ on $[0, g]$ and slope $1$ on $[g, 2g]$.

If $X$ is defined over a finite field $\mathbb{F}_q$ and admits a lift $X^0$ to characteristic zero, there is a well-known relationship between the zeta function $\zeta_X(t)$ appearing in the Weil conjectures, and the Hodge polygons of $X^0$.  Namely, $\zeta_X(t)$ is an alternating product of polynomials $p_i(t)$, where $p_i(t)$ is the determinant of $(I-\on{Frob}_qt)$ acting on the $i$-th $\ell$-adic cohomology group of $X_{\overline{\mbb{F}_q}}$ with constant coefficients.  Dwork \cite{dworkpolygon}, Mazur \cite{mazurpolygon}, and Ogus \cite{oguspolygon} have proven that the Newton polygon of $p_m(t)$ (with respect to the valuation at $q$) lies above the $m$-dimensional Hodge polygon of $X^0$.

We suggest an imprecise analogy with motivic zeta functions, which explains the failure of the convergence of the symmetric powers of $X$.  Before expanding on this analogy, we give a precise version for curves.

Throughout this section, we will use the fact that if $X$ is a smooth projective curve with a rational point, $\on{Pic}^d(X)$ admits a stratification by locally closed subvarieties so that the natural map $\on{Sym}^d(X)\to \on{Pic}^d(X)$ (sending a divisor to its associated line bundle) is a projective space bundle over each stratum.  Furthermore, the fiber over a point $\mcl{L}$ is naturally identified with $\mbb{P}\Gamma(X, \mcl{L})$.

If $f(t)\in K_0(\on{Var}_k)[[t]]$ is a power series over $K_0(\on{Var}_k)$, we will use $[t^k]f(t)$ to denote the coefficient of $t^k$ in $f$.

It is well known (\cite[Theorem 1.1.9]{kapranov}, \cite[Theorem 3.7]{rationalitycriteria}, \cite[Theorem 7.33]{mustata}), that if $X$ is a smooth projective curve with a rational point, $$(1-t)(1-\mbb{L}t)Z^{mot}_X(t)$$ is a polynomial in $K_0(\on{Var}_k)[t]$ of degree $2g$; let us compute its coefficients.  We have that 
\begin{align*}
(1-t)(1-\mbb{L}t)Z^{mot}_X(t) &= (1-\mbb{L}t)\sum_{n=0}^\infty ([\on{Sym}^n(X)]-[\on{Sym}^{n-1}(X)])t^n\\
&=\sum_{n=0}^\infty ([\on{Sym}^n(X)]-[\on{Sym}^{n-1}(X)] +\mbb{L}(-[\on{Sym}^{n-1}(X)]+[\on{Sym}^{n-2}(X)])t^n.
\end{align*}
Here we take $[\on{Sym}^n(X)]=0$ for $n<0$.  In particular, $[t^0](1-t)(1-\mbb{L}t)Z^{mot}_X(t)=1$.  Furthermore $$[\on{Sym}^{2g}(X)]=[\on{Jac}(X)][\mbb{P}^g]$$
$$[\on{Sym}^{2g-1}(X)]=[\on{Jac}(X)][\mbb{P}^{g-1}]$$
$$[\on{Sym}^{2g-2}(X)]=[\on{Jac}(X)][\mbb{P}^{g-2}]+\mbb{L}^{g-1}$$ 
where the last equality follows from the fact that $\omega_X$ is the unique degree $2g-2$ line bundle $\mcl{L}$ with $h^0(X, \mcl{L})=g$ (from Serre Duality and Riemann-Roch), and all other line bundles $\mcl{L}'$ of degree $2g-2$ satisfy $h^0(X, \mcl{L}')=g-1$.

So
\begin{align*}
[t^{2g}](1-t)(1-\mbb{L}t)Z^{mot}_X(t)&= [\on{Jac}(X)][\mbb{P}^g]-[\on{Jac}(X)][\mbb{P}^{g-1}]+\mbb{L}(-[\on{Jac}(X)][\mbb{P}^{g-1}]+[\on{Jac}(X)][\mbb{P}^{g-2}]+\mbb{L}^{g-1})\\
&=[\on{Jac}(X)]\mbb{L}^g+\mbb{L}(-[\on{Jac}(X)]\mbb{L}^{g-1}+\mbb{L}^{g-1})\\
&=\mbb{L}^g.
\end{align*}
Furthermore, we claim that $[t^g](1-t)(1-\mbb{L}t)Z^{mot}_X(t)$ is not in $(\mbb{L})$.  For this we need to understand a bit about the stable birational geometry of $\on{Sym}^{g-1}(X)$ and $\on{Sym}^g(X)$.
\begin{lem} \label{symsblemma}
$\on{Sym}^{g-1}(X)$ and $\on{Sym}^g(X)$ are not stably birational to one another.
\end{lem}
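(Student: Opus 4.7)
The plan is to distinguish $\Sym^{g-1}(X)$ and $\Sym^g(X)$ using the Hodge number $h^{g,0}$, which is a stable birational invariant of smooth projective varieties in characteristic zero. Recall that for a smooth projective variety $Y$, the vector space $H^0(Y,\Omega^p_Y)$ depends only on the birational class of $Y$; combined with the K\"unneth-type identity $h^{p,0}(Y\times\PP^n)=h^{p,0}(Y)$ for $p\geq 1$ and the fact that any stable birational equivalence of smooth varieties can be upgraded to a birational equivalence of smooth projective varieties (by taking smooth projective compactifications and appealing to standard $h^{p,0}$ invariance), one concludes that $h^{p,0}$ is constant on stable birational equivalence classes of smooth projective varieties.

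The heart of the argument is the computation of $h^{g,0}$ of the two symmetric powers in question. First, since $\Sym^{g-1}(X)$ has dimension $g-1<g$, we have trivially
\[
h^{g,0}(\Sym^{g-1}(X))=0.
\]
On the other hand, I would argue that $h^{g,0}(\Sym^g(X))=1$. There are two equally clean ways to see this. The first is to use the Abel--Jacobi map $\Sym^g(X)\to\on{Pic}^g(X)$ associated to the rational point on $X$: by Riemann--Roch and semicontinuity, the generic line bundle of degree $g$ has $h^0=1$, so this map is birational, hence $\Sym^g(X)$ is birational to an abelian variety of dimension $g$, for which $h^{g,0}=\binom{g}{g}=1$. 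The second is a direct computation: using that $\Sym^n(X)$ is smooth and that its cohomology is $(H^*(X)^{\otimes n})^{S_n}$ (with appropriate signs), the Hodge type $(p,0)$ piece is spanned by antisymmetrized tensors in $H^{1,0}(X)$, giving $h^{p,0}(\Sym^n(X))=\binom{g}{p}$ whenever $p\leq\min(n,g)$.

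Putting these together, if $\Sym^{g-1}(X)\times\BA^n$ were birational to $\Sym^g(X)\times\BA^m$, then taking smooth projective compactifications $\Sym^{g-1}(X)\times\PP^n$ and $\Sym^g(X)\times\PP^m$ (both smooth projective since $\Sym^d(X)$ is smooth projective for $X$ a smooth projective curve) and applying birational invariance of $h^{g,0}$ gives
\[
0=h^{g,0}(\Sym^{g-1}(X))=h^{g,0}(\Sym^g(X))=1,
\]
a contradiction. There is no real obstacle here; the entire argument rests on the Hodge-number computation and on standard birational invariance of $H^0(\Omega^p)$. The only minor point to keep honest is the passage from stable birationality in the affine sense to a birational equivalence of smooth projective models, which is entirely routine.
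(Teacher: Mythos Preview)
Your proof is correct and complete, but it proceeds by a genuinely different route than the paper's.  The paper argues geometrically: $\Sym^g(X)$ is birational to $\on{Jac}(X)$, and then one rules out a stable birational equivalence between $\Sym^{g-1}(X)$ and the Jacobian by a dimension count, using that any rational map from an open subset of $\mathbb{A}^n$ to the abelian variety $\on{Jac}(X)$ is constant.  In other words, the paper exploits the absence of rational curves on abelian varieties rather than any cohomological invariant.

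As for what each buys: the paper's argument immediately yields the more general observation (recorded in the subsequent remark) that whenever $\dim(V)>\dim(W)$ and no rational curve passes through a general point of $V$, the varieties $V$ and $W$ are not stably birational; this is characteristic-free and is what is actually used later to handle $\Sym^{n-1}(C)$ versus $\Sym^n(C)$ for all $n\leq g$.  Your Hodge-number argument also covers that range (since $h^{n,0}(\Sym^n X)=\binom{g}{n}\neq 0$ while $h^{n,0}(\Sym^{n-1}X)=0$), and it meshes nicely with the Hodge-theoretic heuristics of the surrounding section, but it is tied to characteristic zero through the birational invariance of $H^0(\Omega^p)$.
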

\begin{proof}
$\on{Sym}^g(X)$ is birational to $\on{Jac}(X)$, so it suffices to show that $\on{Sym}^{g-1}(X)$ is not stably birational to $\on{Jac}(X)$.  Suppose to the contrary that there is a rational, birational map $\on{Sym}^{g-1}(X)\times\mbb{A}^n\to \on{Jac}(X)\times \mbb{A}^{n-1}$.  Then in particular some open subset $U$ of $\mbb{A}^n$ maps injectively to $\on{Jac}(X)\times \mbb{A}^{n-1}$; but as $\on{Jac}(X)$ is an Abelian variety, the image of this map under the projection to $\on{Jac}(X)$ is trivial.  Thus $U$ is entirely contained in some fiber of the projection, so $U$ maps injectively to $\mbb{A}^{n-1}$.  But $U$ has dimension $n$, so this is impossible.
\end{proof}
\begin{rem} \label{stablebiratrem}
This argument shows in general that if $X, Y$ are varieties with $\on{dim}(X)>\on{dim}(Y)$, and no rational curves pass through a general point of $X$, then $X$ and $Y$ are not stably birational.  In particular, for $C$ a smooth proper curve of genus $g$, and $n\leq g$, $\on{Sym}^{n-1}(C)$ is not stably birational to $\on{Sym}^n(C)$ (no rational curves pass through a general point of $\on{Sym}^n(C)$ as it is birational to a subvariety of $\on{Jac}(C)$).
\end{rem}
\begin{cor} \label{gcoefficientcor}
$[t^g](1-t)(1-\mbb{L}t)Z^{mot}_X(t)$ is not in $(\mbb{L})$.
\end{cor}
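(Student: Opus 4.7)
The plan is to read off the coefficient $[t^g](1-t)(1-\mbb{L}t)Z^{mot}_X(t)$ directly from the expansion already displayed just before the statement, and then reduce modulo $\mbb{L}$ and apply the Larsen--Lunts isomorphism $sb : K_0(\on{Var}_k)/\mbb{L} \xrightarrow{\sim} \mbb{Z}[SB]$ from Theorem \ref{zsbthm}. The expansion gives
\[
[t^g](1-t)(1-\mbb{L}t)Z^{mot}_X(t) = [\on{Sym}^g(X)] - [\on{Sym}^{g-1}(X)] + \mbb{L}\bigl([\on{Sym}^{g-2}(X)] - [\on{Sym}^{g-1}(X)]\bigr),
\]
so modulo $\mbb{L}$ the coefficient reduces to $[\on{Sym}^g(X)] - [\on{Sym}^{g-1}(X)]$.

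Next, I would note that for a smooth projective curve $X$, every symmetric product $\on{Sym}^n(X)$ is itself smooth and projective (it is a quotient of the smooth projective variety $X^n$ by a finite group action which is free away from the diagonals, but more simply, $\on{Sym}^n$ of a curve is smooth because any symmetric product of a one-dimensional smooth variety is). Thus under $sb$, the elements $[\on{Sym}^g(X)]$ and $[\on{Sym}^{g-1}(X)]$ map to the basis elements of $\mbb{Z}[SB]$ corresponding to the stable birational equivalence classes of $\on{Sym}^g(X)$ and $\on{Sym}^{g-1}(X)$ respectively.

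Finally, invoke Lemma \ref{symsblemma} (or more generally Remark \ref{stablebiratrem}): since $\on{Sym}^{g-1}(X)$ and $\on{Sym}^g(X)$ are \emph{not} stably birational, the two basis elements are distinct, and their difference is therefore a nonzero element of the free abelian group $\mbb{Z}[SB]$. Transporting back via $sb^{-1}$ shows $[\on{Sym}^g(X)] - [\on{Sym}^{g-1}(X)] \not\equiv 0 \pmod{\mbb{L}}$, which is exactly the assertion that $[t^g](1-t)(1-\mbb{L}t)Z^{mot}_X(t) \notin (\mbb{L})$.

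The only step requiring any genuine input is the stable-birationality separation of $\on{Sym}^{g-1}(X)$ from $\on{Sym}^g(X)$, and that has already been handled in Lemma \ref{symsblemma}; everything else is bookkeeping. There is no serious obstacle, the main point of interest being simply that this coefficient assembles into a \emph{nontrivial} class in $\mbb{Z}[SB]$, whereas the corresponding coefficient $[t^{2g}]$ previously computed was the ``trivial'' class $\mbb{L}^g$ (which maps to zero in $\mbb{Z}[SB]$). This contrast is what will drive the Newton-versus-Hodge polygon comparison in the sequel.
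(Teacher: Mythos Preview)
Your proof is correct and follows essentially the same approach as the paper's: reduce the coefficient modulo $\mbb{L}$ to $[\on{Sym}^g(X)]-[\on{Sym}^{g-1}(X)]$, then apply $sb$ and Lemma \ref{symsblemma} together with the smoothness and projectivity of symmetric powers of a curve to see that this class is nonzero in $\mbb{Z}[SB]$. The paper's version is simply more terse.
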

\begin{proof}
We have that $$[t^g](1-t)(1-\mbb{L}t)Z^{mot}_X(t)=[\on{Sym}^{g}(X)]-[\on{Sym}^{g-1}(X)]\bmod \mbb{L}.$$  As $\on{Sym}^{g-1}(X), \on{Sym}^g(X)$ are smooth and projective, Lemma \ref{symsblemma} gives that the image of this difference in $\mbb{Z}[SB]$ via $sb$ is non-zero, so we have the claim.
\end{proof}
\begin{cor}[The Newton polygon of $(1-t)(1-\mbb{L}t)Z^{mot}_X(t)$ lies below the $1$-dimensional Hodge polygon of $X$]  \label{newtonbelowhodgecor}
For $x\in K_0(\on{Var}_k)$, define $v_\mbb{L}(x)\in \mathbb{Z}_{\geq 0}\cup \{\infty\}$ to be the greatest integer $n$ so that $x\in (\mbb{L}^n)$.  Define the $\mbb{L}$-adic Newton polygon of a polynomial $p(t)=\sum a_i t^i\in K_0(\on{Var}_k)[t]$ to be the lower convex hull of the set of points $(i, v_\mbb{L}(a_i))$ (when there is no risk of confusion, we will drop the modifier ``~$\mbb{L}$-adic").  Then the Newton polygon of $(1-t)(1-\mbb{L}t)Z^{mot}_X(t)$ lies below the $1$-dimensional Hodge polygon of $X$.
\end{cor}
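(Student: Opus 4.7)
The plan is to exploit convexity. On $[0,2g]$ both the Newton polygon $N$ and the Hodge polygon $H$ are convex piecewise-linear functions, and $H$ has breakpoints only at $0$, $g$, and $2g$. To prove $N\leq H$ throughout $[0,2g]$, it therefore suffices to verify the inequality at these three points: on each linear segment $[x_j,x_{j+1}]$ of $H$, writing a general $x$ in the segment as $\lambda x_j+(1-\lambda)x_{j+1}$, convexity of $N$ and linearity of $H$ give
$$N(x)\leq\lambda N(x_j)+(1-\lambda)N(x_{j+1})\leq\lambda H(x_j)+(1-\lambda)H(x_{j+1})=H(x).$$

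The breakpoints $i=0$ and $i=g$ are immediate from computations already in hand. The constant term of $(1-t)(1-\mbb{L}t)Z^{mot}_X(t)$ is $1$, so $v_\mbb{L}(a_0)=0=H(0)$; and Corollary \ref{gcoefficientcor} gives $a_g\notin(\mbb{L})$, so $v_\mbb{L}(a_g)=0=H(g)$. In both cases $N(i)\leq v_\mbb{L}(a_i)=H(i)$.

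The case $i=2g$ is where I expect the main obstacle. We computed $a_{2g}=\mbb{L}^g$, and since $a_{2g}\neq 0$ the Newton polygon terminates at $(2g,v_\mbb{L}(\mbb{L}^g))$, forcing $N(2g)=v_\mbb{L}(\mbb{L}^g)\geq g$. The required bound $N(2g)\leq g$ thus reduces to the non-trivial statement $\mbb{L}^g\notin(\mbb{L}^{g+1})$, which does not follow from the formalism of $K_0(\on{Var}_k)$ alone. To settle it I would apply a motivic measure under which $\mbb{L}$ maps to a non-nilpotent element of a graded or polynomial ring whose $(g+1)$-st power manifestly fails to divide its $g$-th power---for instance, the Hodge--Deligne ($E$-polynomial) map $K_0(\on{Var}_\mbb{C})\to\mbb{Z}[u,v]$ sending $\mbb{L}\mapsto uv$, under which the image of $(\mbb{L}^{g+1})$ lies in $(uv)^{g+1}\mbb{Z}[u,v]$ and so cannot contain $(uv)^g$. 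Over a general algebraically closed field of characteristic zero the analogous argument goes through with the virtual Poincar\'e (weight) polynomial from $\ell$-adic cohomology, sending $\mbb{L}\mapsto t^2$. Combining the three breakpoint bounds with the convexity reduction of the first paragraph then yields $N\leq H$ on all of $[0,2g]$.
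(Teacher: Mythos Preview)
Your proof is correct and takes essentially the same approach as the paper's: both reduce to the three breakpoints $0$, $g$, $2g$ of the Hodge polygon (the paper writes ``it suffices to show'' without spelling out the convexity argument you make explicit), invoke Corollary~\ref{gcoefficientcor} at $i=g$, and use a motivic measure (the paper uses the Poincar\'e polynomial $\mbb{L}\mapsto t^2$) to show $\mbb{L}^g\notin(\mbb{L}^{g+1})$. The one small omission is your unjustified assertion that $v_{\mbb{L}}(1)=0$---this is the statement that $\mbb{L}$ is not a unit, which the paper verifies via the same Poincar\'e polynomial map, and which your $E$-polynomial argument for the $i=2g$ case handles just as well.
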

\begin{proof}
Recall that the $1$-dimensional Hodge polygon of $X$ consists of the segments $[(0, 0), (g, 0)]$ and $[(g, 0), (2g, g)]$.  Thus it suffices to show that $v_\mbb{L}(1)=0$, $v_\mbb{L}([t^g](1-t)(1-\mbb{L}t)Z^{mot}_X(t))=0$ and $v_\mbb{L}(\mbb{L}^{g})=g$.  

That $v_\mbb{L}(1)=0$ is simply the statement that $\mbb{L}$ is not a unit; to see this, note that it suffices to produce any homomorphism out of $K_0(\on{Var}_k)$ sending $\mbb{L}$ to a non-unit.  The homomorphism $K_0(\on{Var}_k)\to \mbb{Z}[t]$ sending a smooth proper variety to its Poincar\'e polynomial \cite[p.331]{Cut-and-paste} suffices.

Corollary \ref{gcoefficientcor} is exactly the statement that $v_\mbb{L}([t^g](1-t)(1-\mbb{L}t)Z^{mot}_X(t))=0$.

Finally $v_\mbb{L}(\mbb{L}^g)$ is greater than or equal to $g$ by definition.  But the Poincar\'e polynomial of $\mbb{L}^g$ is $t^{2g}$; thus $\mbb{L}^{g}$ is not in $(\mbb{L}^{g+1})$ (which maps to $(t^{2g+2})$ via the Poincar\'e polynomial), as desired.
\end{proof}
In fact, we claim that the Newton polygon of $(1-t)(1-\mbb{L}t)Z^{mot}_X(t)$ is actually equal to the $1$-dimensional Hodge polygon of $X$.  To see this, it suffices to show that $$v_\mbb{L}([t^n](1-t)(1-\mbb{L}t)Z^{mot}_X(t))\geq n-g$$ for $n>g$.
\begin{lem}[The Newton polygon of $(1-t)(1-\mbb{L}t)Z_X^{mot}(t)$ lies above the $1$-dimensional Hodge polygon of $X$] \label{newtonabovehodgelemma}
For $n>g,$ $[t^n](1-t)(1-\mbb{L}t)Z^{mot}_X(t)\in (\mbb{L}^{n-g}).$
\end{lem}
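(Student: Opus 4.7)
The plan is to establish a motivic functional equation
$$c_n = \mbb{L}^{n-g}\,c_{2g-n} \qquad \text{for all } n > g,$$
where $c_k := [t^k](1-t)(1-\mbb{L}t)Z^{mot}_X(t)$. Since $(1-t)(1-\mbb{L}t)Z^{mot}_X(t)$ is a polynomial of degree $2g$, each $c_{2g-n}$ is an honest element of $K_0(\on{Var}_k)$, and the functional equation directly yields $c_n \in (\mbb{L}^{n-g})$.

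The engine is the motivic Serre duality identity
\begin{equation*}
[\on{Sym}^d(X)] = [\on{Jac}(X)]\,[\mbb{P}^{d-g}] + \mbb{L}^{d-g+1}\,[\on{Sym}^{2g-2-d}(X)] \qquad (d \geq g),
\end{equation*}
with the convention that $[\on{Sym}^m(X)] = 0$ for $m < 0$ (one checks this recovers the formulas for $d = 2g, 2g-1, 2g-2$ computed in the paper). I would derive this by stratifying the Abel--Jacobi map $\on{Sym}^d(X) \to \on{Pic}^d(X)$ by the Brill--Noether loci $W^r_d := \{\mcl{L} : h^0(X, \mcl{L}) \geq r+1\}$. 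Over each locally closed stratum $W^r_d \setminus W^{r+1}_d$ the map is a $\mbb{P}^r$-bundle, and a telescoping argument yields $[\on{Sym}^d(X)] = \sum_{r \geq 0} [W^r_d]\,\mbb{L}^r$. Riemann--Roch shows $W^r_d = \on{Pic}^d(X) \cong \on{Jac}(X)$ for $r \leq d - g$, while Serre duality ($\mcl{L} \mapsto \omega_X \otimes \mcl{L}^{-1}$) provides an isomorphism $W^r_d \cong W^{r-d+g-1}_{2g-2-d}$ for $r \geq d - g + 1$. Re-summing across these two ranges gives the identity.

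Granted the identity, I substitute into
$$c_n = [\on{Sym}^n(X)] - (1+\mbb{L})[\on{Sym}^{n-1}(X)] + \mbb{L}[\on{Sym}^{n-2}(X)]$$
for $n > g$, applying the identity to each $[\on{Sym}^{n-i}(X)]$ whose index is $\geq g$. The $[\on{Jac}(X)]$ contributions cancel by the elementary identity $[\mbb{P}^k] - (1+\mbb{L})[\mbb{P}^{k-1}] + \mbb{L}[\mbb{P}^{k-2}] = 0$ for $k \geq 2$ (and its degenerate form $[\mbb{P}^1] - (1+\mbb{L}) = 0$, which handles $n = g+1$ directly). The remaining $\mbb{L}^{d-g+1}$-terms reassemble as $\mbb{L}^{n-g}\bigl([\on{Sym}^{2g-n}(X)] - (1+\mbb{L})[\on{Sym}^{2g-n-1}(X)] + \mbb{L}[\on{Sym}^{2g-n-2}(X)]\bigr) = \mbb{L}^{n-g}\,c_{2g-n}$, completing the functional equation.

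The main obstacle is justifying the class identity $[\mbb{P}(\mcl{E})] = [Y]\,[\mbb{P}^r]$ in $K_0(\on{Var}_k)$ that underlies the telescoping expansion. On each Brill--Noether stratum $h^0(X, \mcl{L})$ is constant, so the Hodge bundle $R^0\pi_*(\mcl{P}_{\mathrm{univ}})$ is locally free of rank $r+1$ by cohomology and base change (using the rational point to form the Poincar\'e bundle $\mcl{P}_{\mathrm{univ}}$), and the restricted Abel--Jacobi map is the projectivization of this vector bundle. Further stratifying the base along Zariski opens trivializing the bundle and invoking Noetherian induction then delivers the required class equality in $K_0(\on{Var}_k)$.
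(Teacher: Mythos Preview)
Your proof is correct and takes a genuinely different route from the paper. The paper stratifies $\on{Pic}^n(X)$ by the triple $(h^0(\mcl{L}),\,h^0(\mcl{L}(-x)),\,h^0(\mcl{L}(-2x)))$ for a fixed rational point $x$, expresses $[\on{Sym}^{n}(X)],[\on{Sym}^{n-1}(X)],[\on{Sym}^{n-2}(X)]$ as sums over the same strata, and checks case-by-case (using Riemann's inequality and the constraints $0\le a-b,\,b-c\le 1$) that each surviving term is divisible by $\mbb{L}^{n-g}$. Your argument instead proves the full motivic functional equation $c_n=\mbb{L}^{n-g}c_{2g-n}$ via the Serre-duality identity $[\on{Sym}^d(X)]=[\on{Jac}(X)][\mbb{P}^{d-g}]+\mbb{L}^{d-g+1}[\on{Sym}^{2g-2-d}(X)]$, which is essentially Kapranov's functional equation for $Z_X^{mot}$. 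The paper's approach is slightly more self-contained (it avoids the Serre-duality isomorphism $W^r_d\cong W^{r-d+g-1}_{2g-2-d}$ and works entirely with Riemann's inequality), while yours is more conceptual and yields the stronger conclusion that $c_n/\mbb{L}^{n-g}$ is not merely some element of $K_0(\on{Var}_k)$ but is exactly $c_{2g-n}$; this is the motivic reflection of the classical functional equation, and fits naturally with the Newton-versus-Hodge theme of the section.
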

\begin{proof}
Let $x\in X$ be a rational point.  Let $J_{a, b, c}\subset \on{Pic}^n(X)$ be the locally closed subset consisting of those line bundles $\mcl{L}$ with $h^0(L)=a, h^0(L(-x))=b, h^0(L(-2x))=c$.  Now $$[\on{Sym}^d(X)]=\sum_{a,b,c} [J_{a,b,c}][\mbb{P}^{a-1}]$$
$$[\on{Sym}^{d-1}(X)]= \sum_{a,b,c} [J_{a,b,c}][\mbb{P}^{b-1}]$$
$$[\on{Sym}^{d-2}(X)]=\sum_{a,b,c} [J_{a,b,c}][\mbb{P}^{c-1}]$$
and so $$[t^n](1-t)(1-\mbb{L}t)Z^{mot}_X(t)=\sum_{a,b,c} [J_{a,b,c}]\left([\mbb{P}^{a-1}]-[\mbb{P}^{b-1}]+\mbb{L}(-[\mbb{P}^{b-1}]+[\mbb{P}^{c-1}])\right).$$
By Riemann's inequality, $$a\geq n-g+1; ~b\geq n-g; ~ c\geq n-g-1;$$ furthermore $$0\leq a-b, b-c\leq 1.$$  Now if $a=b=c$, the corresponding term in the sum above vanishes.  If $a>b$, (that is, $a=b+1$) and $b=c$, $$[\mbb{P}^{a-1}]-[\mbb{P}^{b-1}]=\mbb{L}^{a-1};$$ $a-1=b\geq n-g$, as desired, so the term 
\begin{align*}
[J_{a,b,c}]([\mbb{P}^{a-1}]-[\mbb{P}^{b-1}]+\mbb{L}(-[\mbb{P}^{b-1}]+[\mbb{P}^{c-1}])) &= [J_{a,b,c}]\mbb{L}^b\in (\mbb{L}^{n-g}).
\end{align*}
  An identical argument works for the cases $a=b>c$ or $a>b>c$.  Thus $$[t^n](1-t)(1-\mbb{L}t)Z^{mot}_X(t)$$ is a sum of terms in $(\mbb{L}^{n-g})$ and so is in $(\mbb{L}^{n-g})$ itself.
\end{proof}
\begin{rem} \label{newtonpolysarbitrarychar}
This argument works for curves with a rational point in arbitrary characteristic, and implies the classical result that the $q$-Newton polygons of the zeta function $\zeta_X$ associated to a smooth projective curve $X$ over a finite field $\mbb{F}_q$ lie above its associated Hodge polygons.  In particular, the $q$-Newton polygon of $\zeta_X(t)=\psi_q(Z_X^{mot}(t))$ lies above the $\mbb{L}$-adic Newton polygon of $Z_X^{mot}(t)$ (because $\psi_q(\mathbb{L})=q$), which lies above the Hodge polygon of $X$.
\end{rem}
\begin{cor}[The Newton polygon equals the Hodge polygon of $X$]  \label{newtonequalshodgecor}
Over algebraically closed fields of characteristic zero, the Newton polygon of $(1-t)(1-\mbb{L}t)Z^{mot}_X(t)$ consists precisely of the segments  $[(0, 0), (g, 0)]$ and $[(g, 0), (2g, g)]$; that is, the Hodge polygon of $X$.
\end{cor}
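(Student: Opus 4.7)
The plan is to combine Corollary \ref{newtonbelowhodgecor} with Lemma \ref{newtonabovehodgelemma}, which together sandwich the Newton polygon against the Hodge polygon from both sides.

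First I would observe that $(1-t)(1-\mbb{L}t)Z_X^{mot}(t)$ has nonzero constant term $1$ and leading coefficient $\mbb{L}^g$ (with $v_\mbb{L}(\mbb{L}^g)=g$ by the Poincar\'e polynomial argument already used in Corollary \ref{newtonbelowhodgecor}). So its $\mbb{L}$-adic Newton polygon is a genuine convex, piecewise linear function on $[0, 2g]$ whose endpoints $(0,0)$ and $(2g, g)$ agree with those of the Hodge polygon $\ell_{X,1}$. Corollary \ref{newtonbelowhodgecor} already gives that the Newton polygon lies on or below the Hodge polygon, so the only new content is the reverse inequality.

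For the reverse inequality I would combine Lemma \ref{newtonabovehodgelemma} with the nonnegativity of $v_\mbb{L}$: for $n\le g$ we have $v_\mbb{L}([t^n](1-t)(1-\mbb{L}t)Z_X^{mot}(t)) \ge 0 = \ell_{X,1}(n)$, and for $n>g$ the lemma gives $v_\mbb{L}([t^n](1-t)(1-\mbb{L}t)Z_X^{mot}(t)) \ge n-g = \ell_{X,1}(n)$. Thus every point $(n, v_\mbb{L}(a_n))$ in the defining set of the Newton polygon lies on or above the Hodge polygon; since the Hodge polygon is itself convex, the lower convex hull of a collection of points lying on or above it is also bounded below by it. Hence the Newton polygon lies on or above the Hodge polygon, and combining with Corollary \ref{newtonbelowhodgecor} forces equality throughout $[0, 2g]$.

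The genuine technical work in this chain was done in Lemma \ref{newtonabovehodgelemma}, whose proof relied on the stratification of $\on{Pic}^n(X)$ by the dimensions $h^0(\mcl{L}), h^0(\mcl{L}(-x)), h^0(\mcl{L}(-2x))$ together with Riemann's inequality. By comparison, the present corollary is essentially a bookkeeping assembly of the two prior bounds; I do not anticipate any obstacle beyond confirming matching endpoints and invoking convexity of the Hodge polygon to pass from the pointwise bounds on $v_\mbb{L}(a_n)$ to the lower convex hull.
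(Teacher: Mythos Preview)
Your proposal is correct and follows exactly the paper's approach: the paper's proof is the single sentence ``This follows immediately from Corollary \ref{newtonbelowhodgecor} and Lemma \ref{newtonabovehodgelemma},'' and your write-up simply spells out the convexity bookkeeping implicit in that sentence.
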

\begin{proof}
The follows immediately from Corollary \ref{newtonbelowhodgecor} and Lemma \ref{newtonabovehodgelemma}.
\end{proof}
Let us consider the (heuristic) relationship between Newton polygons, MSSP, and False Claims \ref{False Claim 1} and \ref{False Claim 2}.  The important identity here is the following:
\begin{equation}\label{classnumber}
\lim_{n\to \infty} [\on{Sym}^n(X)]= (1-t)Z_X^{mot}(t)|_{t=1}
\end{equation}
where both sides are evaluated in $R=\widehat{K_0(\on{Var}_k)}$, the $\mbb{L}$-adic completion of $K_0(\on{Var}_k)$ (or really any completion of $K_0(\on{Var}_k)$).

Imagine for a second that $Z_X^{mot}(t)$ was a rational or meromorphic function.  (We will leave these terms purposefully vague.)  Then one would expect the identity (\ref{classnumber}) to hold only if the power series expansion $$(1-t)Z_X^{mot}(t)=(1-t)\sum_{n=0}^\infty [\on{Sym}^n(X)]t^n$$ is valid near $t=1$; in particular, if this identity holds, $Z_X^{mot}(t)$ has no poles $y$ with $v_\mbb{L}(y)<1$.  The valuations of such poles are determined by the Newton polygon of the denominator of $Z^{mot}_X(t)$ --- one expects there to be a pole of valuation less than $1$ if and only if the Newton polygon of the denominator has a segment with slope less than $1$.

Now, by analogy with the Weil conjectures, one expects the denominator of $Z^{mot}_X(t)$ to factor as a product of polynomials relating to the even-degree cohomology of $X$; by analogy to Corollary \ref{newtonequalshodgecor}, the Newton polygons of these polynomials should lie below the $2m$-dimensional Hodge polygons of $X$.  These Hodge polygons contain a segment of slope less than $1$ (namely, of slope zero) if and only if $$h^0(X, \Omega^{2m}_X)\not=0$$ for some $m$.  

\begin{question}
Thus, we ask:  does False Claim \ref{False Claim 1} fail for all $X$ with $h^0(X, \Omega^{2m}_X)\not=0$?  We have proven this for $X$ of dimension $2$.  
\end{question}

While this paper was in preparation, Melanie Wood informed the author \cite{Melaniepersonal} of a stronger result than Theorem \ref{msspfailsthm}.

\begin{prop} \label{melanieprop}
Let $X$ be smooth and projective.  Suppose that either $\on{dim}(X)$ is even and $h^0(X, \omega_X^n)$ is non-zero, or that $h^0(X, \omega_X^{2n})$ is non-zero for some $n$.  Then if $\on{Sym}^n(X)$ is stably birational to $\on{Sym}^m(X)$, $m=n$.
\end{prop}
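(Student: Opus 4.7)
The plan is to combine the construction of $S_N$-invariant pluricanonical forms on $X^N$ with the maximal rationally connected (MRC) quotient, an invariant which converts stable birationality to birationality for non-uniruled varieties.

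\emph{Step 1: Construct a pluricanonical section on $\on{Sym}^N(X)_{\on{sm}}$.} Both hypotheses provide an integer $k \geq 1$ with $h^0(X, \omega_X^k) \neq 0$ and $k\dim(X)$ even: in case (a) any such $k$ works since $\dim(X)$ is even, while in case (b) one takes $k = 2n$. A transposition in $S_N$ acts on $\omega_{X^N}$ at a point of the diagonal by the sign $(-1)^{\dim(X)^2} = (-1)^{\dim(X)}$ (coming from reordering a top wedge of a direct sum), and hence trivially on $\omega_{X^N}^k$. For non-zero $\omega \in H^0(X, \omega_X^k)$, the external power $\omega^{\boxtimes N}$ is therefore an $S_N$-invariant section of $\omega_{X^N}^k$, and descends via the \'etale Galois cover $X^N_{\on{distinct}} \to \on{Sym}^N(X)_{\on{sm}}$ to a non-zero section $\omega^{(N)} \in H^0(\on{Sym}^N(X)_{\on{sm}}, \omega^k)$.

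\emph{Step 2: Deduce non-uniruledness.} Let $Y \to \on{Sym}^N(X)$ be a resolution of singularities. Since $\on{Sym}^N(X) = X^N / S_N$ has quotient singularities, which are Kawamata log terminal (klt) in characteristic zero, pluricanonical forms on the smooth locus extend to $Y$---provided $k$ is divisible enough that $kK_{\on{Sym}^N(X)}$ is Cartier, which we arrange by self-tensoring $\omega$. Thus $Y$ has a non-zero plurigenus, and is therefore not uniruled.

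\emph{Step 3: Apply the MRC argument.} If $\on{Sym}^n(X)$ is stably birational to $\on{Sym}^m(X)$, smooth projective models $Y_n$ and $Y_m$ satisfy $Y_n \times \mathbb{P}^a$ birational to $Y_m \times \mathbb{P}^b$ for some $a, b$. The MRC quotient $Q(-)$ is a birational invariant, and $Q(V \times \mathbb{P}^a) = Q(V)$ birationally, since $V \times \mathbb{P}^a \to V \to Q(V)$ is a rationally connected fibration over a non-uniruled base. By Step 2 both $Y_n$ and $Y_m$ are non-uniruled, so $Q(Y_n) = Y_n$ and $Q(Y_m) = Y_m$ up to birational equivalence; hence $Y_n$ is birational to $Y_m$. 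Equality of dimensions then gives $n \dim(X) = m \dim(X)$, i.e., $n = m$.

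The main obstacle is the extension of $\omega^{(N)}$ across the singular locus in Step 2, which relies on the klt property of quotient singularities and the standard extension theorem for pluricanonical forms across them; the remaining steps are direct applications of well-known birational invariants.
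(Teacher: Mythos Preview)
The paper does not prove this proposition; it is stated as a result communicated to the author by Melanie Wood \cite{Melaniepersonal}, with no argument given. So there is nothing in the paper to compare your proof against, and I will simply assess your argument on its own.

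Your proof is essentially correct for $\dim X\geq 2$, which is the case the paper actually uses. Two points deserve care. First, in Step~1 the \'etale quotient $X^N_{\mathrm{distinct}}\to\on{Sym}^N(X)$ lands in the open stratum, not in all of the smooth locus; the descended form then extends to $\on{Sym}^N(X)_{\mathrm{sm}}$ only because the big diagonal has codimension $d=\dim X\geq 2$ (Hartogs for sections of a line bundle on a normal variety). Second, in Step~2 the klt property alone does \emph{not} force pluricanonical forms on the smooth locus to extend to a resolution---that needs canonical singularities. Fortunately $\on{Sym}^N(X)$ does have canonical singularities when $d\geq 2$: by the Reid--Tai criterion, an element of $S_N$ with cycle lengths $c_1,\dots,c_r$ has age $\tfrac{d}{2}\sum_i(c_i-1)\geq \tfrac{d}{2}\geq 1$. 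With this correction your Steps~2 and~3 go through.

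For $\dim X=1$ both steps genuinely break: transpositions act as pseudoreflections, the diagonal is a divisor, and the form need not extend---and indeed the proposition as literally stated is false for curves. For an elliptic curve $E$ one has $h^0(E,\omega_E^{2n})=1$, yet $\on{Sym}^n E$ is a $\mathbb{P}^{n-1}$-bundle over $E$ for every $n\geq 1$, hence all $\on{Sym}^n E$ are stably birational to $E$. So the intended statement is presumably for $\dim X\geq 2$, and that is exactly what your argument establishes.
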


Applying the arguments of Corollaries \ref{falseclaimsarefalsecor} and \ref{msspfalsecor} where $X$ is a smooth surface with a non-vanishing plurigenus gives further examples of failures of False Claim \ref{False Claim 2}, and conditional failures of MSSP.  These examples include those given here, as well as e.g.~Enriques surfaces and surfaces of general type with vanishing geometric genus.  This suggests that our heuristic is incomplete. 

Unfortunately, Proposition \ref{melanieprop} does not give examples in higher dimensions, as Corollaries \ref{falseclaimsarefalsecor} and \ref{msspfalsecor} use the existence of a desingularization $S$ of $\on{Sym}^n(X)$ which satisfies $$S=\on{Sym}^n(X) \bmod \mbb{L};$$ the existence of such a desingularization in dimension greater than $2$ appears to be an open question \cite[Question 6.7]{rationalitycriteria}.  Ulyanov \cite[Theorem 2]{ulyanov} constructs a compactification of the configuration space of \emph{distinct, labeled} points in $X$ upon which the symmetric group acts with Abelian stabilizers; he remarks that desingularizing this compactification should be doable via existing methods.  So perhaps this question is tractable.  Ulyanov's compactification is a modification of the well-known Fulton-MacPherson compactification \cite{fultonmacpherson}.

\end{document}